\newcommand{\theory}[1]{\textup{\textsf{#1}}}
\newcommand{\ICL}{\theory{ICL}}
\newcommand{\IPL}{\theory{IPL}}
\newcommand{\CPL}{\theory{CPL}}
\newcommand{\mz}{\textbf{0}}
\newcommand{\mj}{\textbf{1}}
\newcommand{\mzz}{\textbf{0}^2}
\newcommand{\mjj}{\textbf{1}^2}
\newcommand{\mzj}{\textbf{2}}
\newcommand{\vau}{\textbf{3}} 
\newcommand{\es}{\mathcal{S}}
\newcommand{\kor}{\,\mathbf{r}\,}
\newcommand{\oraz}{\,\,\mathrm{and}\,\,}
\newcommand{\lub}{\,\,\mathrm{or}\,\,}
\newcommand{\wtw}{\,\,\mathrm{iff}\,\,}
\newcommand{\pneg}{$\perp$-negation}
\newcommand{\Asim}{{\sim}}
\newtheorem{fct}{Fact}
\newtheorem{thm}[fct]{Theorem}
\newtheorem{prp}[fct]{Proposition}
\newtheorem{cor}[fct]{Corollary}
\theoremstyle{definition}
\newtheorem{dfn}[fct]{Definition}
\newtheorem{example}[fct]{Example}
\title{Negational Fragment \\of Intuitionistic Control Logic}
\author{Anna Glenszczyk}
\date{Draft of \today}
\begin{document}

\maketitle
\begin{abstract}
We investigate properties of monadic purely negational fragment of Intuitionistic Control Logic ($\ICL$). This logic arises from Intuitionistic Propositional Logic ($\IPL$) by extending language of $\IPL$ by additional new constant for {\em falsum}. Having two different falsum constants enables to define two forms of negation. We analyse implicational relations between negational monadic formulae and present a poset of~non equivalent formulae of this fragment of $\ICL$.
\end{abstract}
{\flushleft
{\bf MSC (2010)} Primary: 03B60, 03B20, 03B70; Secondary: 03B62,\\
{\bf Keywords:} Intuitionistic Control Logic, intuitionistic logic, classical logic, Kripke models.}

\section{Introduction}
Intuitionistic Control Logic ($\ICL$) was defined semantically and proof theoretically by Chuck Liang and Dale Miller in their joint work \cite{LMICL}. This logic can be seen as a combination of classical and intuitionistic logics. The original impetus for $\ICL$ came from the search for a~logic that would preserve the crucial connective of intuitionistic implication and at the same time would be able to type programming language control operators such as {\em call/cc}. $\ICL$ adds to the language of $\IPL$ a new constant $\perp$ which is distinct from intuitionistic falsum~$0$. 
Having these two falsum constants $0$ and $\perp$ enables to define two forms of negation: $\Asim  A = A \rightarrow 0$ and $\neg A = A \rightarrow \perp$ respectively.

Let us compare negations in classical and intuitionistic logics. We denote intuitionistic negation by $\Asim  A$ and classical by $\neg A$. It corresponds with notation in $\ICL$ and intuitive meaning of $\neg A$ as "classical" negation in this logic. In Classical Propositional Logic $(\CPL)$ there exist only two non equivalent negational formulae: $A, \neg A$. The classical negation is involutive i.e. $\neg \neg A \leftrightarrow A$ is a $\CPL$ tautology, so it is not possible to define a new operator by iterating classical negation. In $ \IPL $ there are three non equivalent negational formulae: $A, \Asim  A, \Asim  \Asim  A$. It is known that $\Asim  \Asim  A$ does not imply $A$ in intuitionistic logic, but $\Asim  \Asim  \Asim  A \leftrightarrow \Asim  A$ is an intuitionistic tautology. Thus using intuitionistic implication we can obtain a new operation which is a~double negation $\Asim  \Asim $. Further multiplying of intuitionistic negations will only give us (up to equivalence) a formula $\Asim  A$ in case of odd number of negations or a formula $\Asim  \Asim  A$ if there is even number of negations to start with. 

In Intuitionistic Control Logic there are two distinct negations: $\Asim  A$ which is an ordinary intuitionistic negation and $\neg A$ which bears some characteristics of classical negation. Combination of these two kinds of negation results in possibility of forming new operators. We extract them and present their relations with respect to intuitionistic implication. 

\section{Preliminaries}
In this section we recall some facts about $\ICL$ from~\cite{LMICL}. We consider only propositional logic. Language of $\ICL$ consists of countably many variables denoted $p_{1}, p_{2}, p_{3}, \ldots$, intuitionistic connectives $\lor, \land, \rightarrow$ called respectively disjunction, conjunction and implication and of three constants $0, 1, \perp$. As a~shorthand for $(A\rightarrow B) \land (B \rightarrow A)$ we use the expression $A \leftrightarrow B$. A~Kripke model for $\ICL$ will be called an {\em r-model} and is defined as follows.

\begin{dfn}\label{r-model}
A~\textit{Kripke r-model} is a~quadruple of the form $\langle W, \kor,  \leq, \Vdash \rangle$ where $W$ is a finite, non-empty set, $\leq$ is a reflexive and transitive relation on the set $W$ and $\Vdash$ is a binary relation between elements of $W$ and atomic formulae called {\em forcing}. Elements of the set $W$ are called {\em worlds} or {\em nodes}. The~element $\kor \in W$ is the {\em root} of the model. It is the least element of the set $W$ ($\kor \leq u$ for every world $u \in W$).
\end{dfn}
The~forcing relation $\Vdash$ is monotone, that is if $u \leq v$ then $u \Vdash p$ implies $v \Vdash p$. The $\Vdash$ relation is extended to all formulae in the following way. Let $u, v, i \in W$.
\begin{itemize}
\item $u \Vdash 1$ and $u \not \Vdash 0$
\item $\kor \not \Vdash \perp$
\item $i \Vdash \perp$ for all $i > \kor$
\item $u \Vdash A \vee B$ iff $u \Vdash A$ or $u \Vdash B$
\item $u \Vdash A \wedge B$ iff $u \Vdash A$ and $u \Vdash B$
\item $u \Vdash A \rightarrow B$ iff for all $v \geq u$ if $v \Vdash A$ then $v \Vdash B$.
\end{itemize}
If a formula is forced in every world of an r-model, we say that it is satisfied in this r-model. If a formula $A$ is satisfied in all r-models we say that it is valid, in symbols ${}\models A$.

Constants $0$ and $1$ corresponds to intuitionistic falsum and verum. A~formula of $\ICL$ that does not contain constant $\perp$ is an intuitionistic formula. Forcing of $\perp$ distinguishes between the root of an r-model and the rest of worlds. We will call every world properly above the~root an \textit{imaginary world}. We use symbols $u, v , w$ to represent arbitrary worlds in $W$ and the symbol $i$ to represent an imaginary world.

Because of two different constants for falsum, it is possible to define two different negations \\
\begin{tabular}{lr}
\rule[-2ex]{0pt}{5.5ex}
Intuitionistic negation: & $\Asim  p \,= \,p \rightarrow 0$ \\ 
\rule[-2ex]{0pt}{5.5ex} 
Classical negation: & $\neg\, p \,= \,p \rightarrow \perp$ \\ 
\end{tabular} 

The~term \textit{classical} in the~name of the~second negation refers to the~law of excluded middle which, with respect to this negation, is an~$\ICL$ tautology. Let us suppose that $p \lor \neg p$ is refuted in the~root of some r-model: $$ \kor \not \Vdash p \lor \neg p.$$ This is equivalent to $$\kor \not \Vdash p \oraz \kor \not \Vdash \neg p$$ which implies that $\kor \not \Vdash p$ and there exists a world $u \geq \kor$ such that  $u \Vdash p$ and $u \not \Vdash \perp$. Condition $u \not \Vdash \perp$ means that $u = \kor$. Hence we get a~contradiction. Nevertheless this negation is not fully classical --- it is not involutive as $\neg \neg p $ does not imply $p$. It is because this negation is defined using intuitionistic implication. For this reason we prefer to call it $\perp$-{\em negation}, instead of "classical" negation.

In \cite{LMICL} Liang and Miller defined sequent calculus LJC for Intuitionistic Control Logic and proved soundness and completeness of LJC with respect to the Kripke semantics. However, in this paper we focus on the semantical approach and the equivalence between provability of a formula and its validity in all r-models is alluded to only in Theorem \ref{extensionality}. The symbol $\vdash A$ denotes provability of a formula $A$ in LJC. 

For a background in intuitionistic logic see \cite{MIN}.

\section{Negational fragment}
In this paper we will consider monadic purely negational fragment of $\ICL$, i.e. the~fragment in the~language of $\Asim $, $\neg$ and $p$ only. It means that we treat both negations as primitive connectives, not defined by means of constants and implication. Formulae of this fragment will be called \textit{n-formulae}. 

It~will be understood that $N_{k}$ and $N_{m}$ are different sequences of both negations and that $k, m \in \{0, 1, 2, \ldots\} = \mathbb{N}$.  To discriminate sequences of the same length we will use superscripts $N_{k}^{1}, N_{k}^{2}$ etc. By $\Asim^n$ and $\neg^n$ we will understand iteration of $n$ negations of given kind. 
We will denote by $N_{k} p$ an~n-formula with $k$ negations of both kinds. By \textit{the~length of an n-formula} $N_k p$ we define the number $k$ of negations. Formulae of the~form $N_{2j} p$ and $N_{2j+1} p$ will be called {\em even n-formula} and {\em odd n-formula}, respectively. We will treat the variable $p$ as a negational formula of the length $0$.

Every r-model defined as in Definition~\ref{r-model} is a model for monadic purely negational fragment of $\ICL$ as well. However, since negations $\Asim$ and $\neg$ are our primitive notions, considering r-models for negational fragment we should define interpretation of these connectives independently.
\begin{dfn}\label{for:fragm}
A~Kripke model for the negational fragment of $\ICL$ is a tuple $\mathcal{M} = \langle W, \kor,  \leq, \Vdash \rangle$ where $W$, $r$ and $\leq$ are defined as in Definition~\ref{r-model} and the forcing relation $\Vdash$ is restricted to the variable $p$ and constants $0$, $\bot$. Additionally we define the interpretation of negations:
\begin{itemize}
\item $u \Vdash \Asim  A \wtw w \not \Vdash A, \,\mathrm{for\, all}\, w \geq u$
\item $u \Vdash \neg A \wtw  w \not \Vdash A \lub w > \kor, \,\mathrm{for\,all}\, w \geq u$.
\end{itemize}
\end{dfn}
It is according to the definition of forcing for constants $0, \perp$ and intuitionistic implication in the case of full language. Forcing of intuitionistic negation is standard. For \pneg\, we have \[ u \Vdash \neg p \wtw  w \not \Vdash p \lub w \Vdash \perp,\,\mathrm{for\, all}\, w \geq u.\] The condition $w \Vdash \perp$ means that $w$ is an imaginary world. 
\begin{fct}
\label{f:kor}
For every n-formula $A$ we have:
\begin{enumerate}
\item\label{kor:f} $\kor \Vdash \neg A \wtw \kor \not \Vdash A$,
\item\label{kor:r} $\kor \not \Vdash \neg A \wtw \kor \Vdash A$,
\item\label{pneg:r} $u \not \Vdash \neg A \wtw u = \kor \oraz u \Vdash A \, \mathrm{for\, arbitrary\,} \,\, u \in W$,
\item $\,i \,\Vdash \neg A \, \mathrm{for\, all\,} \,\, i > \kor$.
\end{enumerate}
\end{fct}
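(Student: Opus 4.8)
The plan is to derive all four items by directly unwinding the forcing clause for $\neg$ from Definition~\ref{for:fragm}, using only two elementary observations about the order: that $\kor$ is the $\leq$-least element of $W$, and that consequently $\kor$ is the unique world $w$ with $w \not> \kor$, every other world lying strictly above the root.

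First I would prove item~(\ref{pneg:r}), from which (\ref{kor:f}) and (\ref{kor:r}) follow at once. By definition, $u \not\Vdash \neg A$ means it is \emph{not} the case that for all $w \geq u$ we have $w \not\Vdash A$ or $w > \kor$; pushing the negation through the quantifier, this says there is some $w \geq u$ with $w \Vdash A$ and $w \not> \kor$. The condition $w \not> \kor$ forces $w = \kor$, so this witness is the root; and $w \geq u$ together with $\kor \leq u$ gives $u = \kor$. Hence $u = \kor$ and $\kor \Vdash A$, i.e.\ $u = \kor$ and $u \Vdash A$. Conversely, if $u = \kor$ and $u \Vdash A$, then $w := \kor$ is a witness of $w \geq u$, $w \Vdash A$, $w \not> \kor$, so $u \not\Vdash \neg A$. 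This establishes~(\ref{pneg:r}).

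Item~(\ref{kor:r}) is then the instance of~(\ref{pneg:r}) at $u = \kor$ (the side condition $u = \kor$ becomes vacuous): $\kor \not\Vdash \neg A$ iff $\kor \Vdash A$. Item~(\ref{kor:f}) is just the contrapositive of~(\ref{kor:r}). For~the last item, let $i > \kor$ and take any $w \geq i$; by transitivity $w \geq i > \kor$, hence $w > \kor$, so the disjunction ``$w \not\Vdash A$ or $w > \kor$'' holds for every $w \geq i$, which by definition means $i \Vdash \neg A$.

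I do not expect a real obstacle: the statement is an immediate consequence of the semantic clause for $\neg$, and each part is a few lines of quantifier bookkeeping. The only place calling for any care is the order-theoretic step that identifies ``$w \not> \kor$'' with ``$w = \kor$'' and then deduces $u = \kor$ from $w \geq u$; this uses that $\kor$ is the least element (and, since $\leq$ is only assumed reflexive and transitive, the convention that $w > \kor$ abbreviates $w \neq \kor$). Once that is granted, everything else is routine.
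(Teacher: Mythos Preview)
Your argument is correct and matches the paper's own justification, which is even more terse: the paper simply says the first point is straightforward from the definition, displays the equivalence $u \not\Vdash \neg A \Leftrightarrow (\exists\, w \geq u)(w \Vdash A \text{ and } w = \kor)$, and remarks that the last item follows. Your derivation of item~(\ref{pneg:r}) first and reading off the others from it is exactly in this spirit, just spelled out in more detail; the only subtlety you flag---that $\leq$ is declared merely reflexive and transitive, so one tacitly uses $w > \kor$ to mean $w \neq \kor$ and antisymmetry at the root---is real but harmless here, and the paper does not comment on it either.
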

The first point is straightforward from definition. In $\ICL$ the distinction between the root of the r-model and other worlds is expressed by the forcing of $\perp$, whereas in the monadic purely negational fragment the root of the model is the only world in which \pneg\, of a~formula can be refuted: \[ u \not \Vdash \neg A \wtw w \Vdash A \oraz w = \kor, \,\mathrm{for\, some\,} w \geq u. \] It follows that \pneg\, of a formula is forced in every imaginary world.

We are interested in relations between n-formulae and we investigate validity of formulae of the form $N_{k} p \rightarrow N_{m} p$. We denote by $\mathcal{N}$ the set of all n-formulae. In the standard way we define an~equivalence relation $\equiv$ on the set $\mathcal{N}$: $$A \equiv B \, \mathrm{iff} \, \models A \rightarrow B \oraz \models B \rightarrow A.$$ As usual, we consider the quotient set:
$$\mathcal{N}/_{\equiv} = \{[A]_{\equiv} \,\, | \,\, A \in \mathcal{N} \},$$
where $[A]_\equiv$ is the equivalence class of a formula $A$. The relation $\preceq$ on $\mathcal{N}/_{\equiv}$ is given by:
$$[A]_\equiv \preceq [B]_\equiv \,\, \mathrm{iff} \,\, \models A \rightarrow B.$$ Although the relation $\preceq$ is defined on equivalence classes, no confusion should arise if we use it to denote a relation between two n-formulae:
$$A \preceq B \,\, \mathrm{iff}\,\, \models A \rightarrow B.$$ If for an n-formula $A = N_k p$ exists an n-formula $B = N_m p$ such that $A \equiv B$ and $m < k$, we say that $A$ {\em is reducible to} $B$. In the other case we say that an n-formula $A$ is {\em irreducible}. 

Most proofs of facts about implicational relations between n-formulae are reduced to showing a contradiction in the procedure of finding a countermodel for a formula $A \rightarrow B$. For n-formulae of a length greater that $4$ we repeatedly refer to extensionality. 

\begin{thm}\label{extensionality}
For any formula $A(\bar{p}, s)$ and for all formulae $B, C$ if $\vdash B \leftrightarrow C$ then $$\vdash A(\bar{p}, B/s) \leftrightarrow A(\bar{p}, C/s).$$
\end{thm}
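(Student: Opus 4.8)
The plan is to prove the statement by structural induction on the formula $A(\bar p, s)$, treating the distinguished position $s$ as an auxiliary propositional variable into which we substitute. Throughout, write $A^B$ for $A(\bar p, B/s)$ and $A^C$ for $A(\bar p, C/s)$, so that the goal becomes $\vdash A^B \leftrightarrow A^C$.

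For the base cases: if $A$ is the variable $s$ itself, then $A^B = B$ and $A^C = C$, and $\vdash A^B \leftrightarrow A^C$ is precisely the hypothesis $\vdash B \leftrightarrow C$. If $A$ is any other variable $p_j$ or one of the constants $0$, $1$, $\perp$, then $s$ does not occur in $A$, so $A^B = A = A^C$, and $\vdash A \leftrightarrow A$ holds by reflexivity of provable equivalence (derivability of $D \rightarrow D$ in LJC for every $D$).

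For the inductive step, let $A = A_1 \circ A_2$ with $\circ \in \{\land, \lor, \rightarrow\}$, these being the only formula-forming (binary) connectives of $\ICL$. The induction hypothesis gives $\vdash A_1^B \leftrightarrow A_1^C$ and $\vdash A_2^B \leftrightarrow A_2^C$, while $A^B = A_1^B \circ A_2^B$ and $A^C = A_1^C \circ A_2^C$ by the definition of substitution. Hence it suffices to establish, for each $\circ$, the congruence lemma: if $\vdash D_1 \leftrightarrow E_1$ and $\vdash D_2 \leftrightarrow E_2$ then $\vdash (D_1 \circ D_2) \leftrightarrow (E_1 \circ E_2)$. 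Each such lemma is routine --- the corresponding schema $(D_1 \leftrightarrow E_1) \land (D_2 \leftrightarrow E_2) \rightarrow ((D_1 \circ D_2) \leftrightarrow (E_1 \circ E_2))$ mentions only $\land, \lor, \rightarrow$, is an intuitionistic theorem, and is therefore provable in LJC, since LJC conservatively extends $\IPL$ and so proves every $\perp$-free intuitionistic theorem; alternatively one writes out a short direct derivation in the sequent calculus from the left and right rules for $\circ$ together with cut on $D_i \leftrightarrow E_i$. Instantiating with $D_i := A_i^B$ and $E_i := A_i^C$ yields $\vdash A^B \leftrightarrow A^C$, completing the induction.

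I do not expect a genuine obstacle here; the argument is essentially bookkeeping. The one point that needs care is the implication case: because the antecedent position of $\rightarrow$ is contravariant, one genuinely needs both $\vdash A_1^B \rightarrow A_1^C$ and $\vdash A_1^C \rightarrow A_1^B$ from the induction hypothesis, so the inductive claim must be carried as the full biconditional and cannot be shortened by a symmetry argument. A fully semantic route is equally available and perhaps closer in spirit to the rest of the paper: by soundness and completeness of LJC it is enough to prove $\models A^B \leftrightarrow A^C$, and one shows by induction on $A$ that $u \Vdash A^B$ iff $u \Vdash A^C$ at every world $u$ of every r-model, using that $\models B \leftrightarrow C$ forces $u \Vdash B$ iff $u \Vdash C$ at every $u$ and that each forcing clause refers only to the forcing of the immediate subformulae at worlds $\geq u$.
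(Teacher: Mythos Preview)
Your proof is correct and follows exactly the approach the paper gestures at: the paper merely remarks that the result is well known for $\IPL$, is proved by induction on the complexity of formulae, and that the additional constant $\perp$ does not interfere. Your proposal carries out that induction explicitly (base cases including $\perp$, congruence for $\land,\lor,\rightarrow$, with the contravariance caveat), so it is a strict elaboration of the paper's one-line sketch.
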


It is well-known that this theorem holds for $\IPL$. The proof is by induction on the complexity of~formulae. However, the case of additional constant $\perp$ does not interfere with the proof, thus the theorem holds for $\ICL$ as well. 

\section{Relations between models}
In a Kripke model for either $\ICL$ or its negational fragment let $\circ$ mark a~node in which a variable $p$ is refuted and $\bullet$ a node in which $p$ is forced.

Firstly, let us consider two basic n-formulae $\Asim  p$ and $\neg p$. It is easy to see that minimal model and countermodel for $\Asim  p$ are  $\circ$ and $\bullet$ respectively. In case of $\neg p$ due to Fact~\ref{f:kor} we have $\kor \Vdash \neg p \wtw \kor \not \Vdash p $ and $\kor \not \Vdash \neg p \wtw \kor \Vdash p$. Thus a minimal model for $\neg p$ is $\circ$ and minimal countermodel is $\bullet$. 

The fact that $\Asim  p$ and $\neg p$ have the same minimal models and countermodels does not imply that these formulae are equivalent. There is a model in which $\neg p$ is satisfied and $\Asim  p$ is refuted, namely:
$$
  \xymatrix{\bullet\ar@{-}[d]\hspace{1pt} i \\ \circ \hspace{1pt}\kor}
$$

It is easy to see, that for every n-formula $N_{k} p$ the minimal model or countermodel are $\circ$ and $\bullet$. These cases may seem not very interesting as they collapse both negations to the situation of ordinary classical negation. However, looking for a countermodel for an intuitionistic implication of two formulae is equivalent to looking for a model for the antecedent and a countermodel for the consequent. While considering an~implication of n-formulae, one of these cases are frequently reduced to either $\circ$ or $\bullet$, so it is sufficient to know if the variable can be forced or refuted in a given world of the model. This depends on the evenness of the sequence of negations preceding the variable. The following fact becomes useful in such situations.

\begin{prp}
\label{forcing}
For every world $u$ in an r-model $\mathcal{M}$ we have:
\begin{enumerate}
\item\label{f:even} if $u \Vdash N_{2k} p$ then $w \Vdash p \lub w > \kor$, for some $w \geq u$,
\item\label{f:odd} if $u \Vdash N_{2k+1} p$ then $w \not \Vdash p \lub w > \kor$, for some $w \geq u$,
\item\label{r:even} if $u \not \Vdash N_{2k} p$ then $w \not \Vdash p \lub w > \kor$, for some $w \geq u$,
\item\label{r:odd} if $u \not \Vdash N_{2k+1} p$ then $w \Vdash p \lub w > \kor$, for some $w \geq u$.
\end{enumerate} 
\end{prp}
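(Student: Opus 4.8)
The plan is to prove the four statements simultaneously by induction on the length $n$ of the n-formula $N_n p$. Observe that for a fixed $n$ exactly two of the claims apply: claims~\ref{f:even} and~\ref{r:even} when $n=2k$, claims~\ref{f:odd} and~\ref{r:odd} when $n=2k+1$. So at each length there is one ``forcing'' statement and one ``refutation'' statement, and all four conclusions have the shape $w \Vdash p \lub w > \kor$, or $w \not\Vdash p \lub w > \kor$, for some $w \geq u$. For the base case $n=0$ we have $N_0 p = p$, and taking $w=u$ settles claim~\ref{f:even} when $u \Vdash p$ and claim~\ref{r:even} when $u \not\Vdash p$.

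For the inductive step write $N_n p = \Asim N_{n-1}p$ or $N_n p = \neg N_{n-1}p$, where $N_{n-1}p$ has the opposite parity. Suppose first the outermost connective is $\Asim$. If $u \Vdash \Asim N_{n-1}p$ then $u \not\Vdash N_{n-1}p$ (instantiating the forcing clause for $\Asim$ at $w=u$), and the refutation claim of the induction hypothesis for $N_{n-1}p$, applied at $u$, produces the desired $w$; e.g. claim~\ref{f:even} at length $2k+2$ reduces this way to claim~\ref{r:odd} at length $2k+1$, and the conclusions coincide. If $u \not\Vdash \Asim N_{n-1}p$ then $v \Vdash N_{n-1}p$ for some $v \geq u$, and the forcing claim of the induction hypothesis applied at $v$ gives a $w \geq v \geq u$ of the required form.

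Suppose now the outermost connective is $\neg$. Here the uniform ``$w > \kor$'' disjunct does the work: if $u$ is an imaginary world we take $w=u$ and finish immediately, which disposes of claims~\ref{f:even} and~\ref{f:odd}; for claims~\ref{r:even} and~\ref{r:odd} we have $u \not\Vdash \neg N_{n-1}p$, so Fact~\ref{f:kor}(\ref{pneg:r}) forces $u=\kor$ and the imaginary-world subcase does not occur. When $u=\kor$ we invoke Fact~\ref{f:kor}: $\kor \Vdash \neg N_{n-1}p$ gives $\kor \not\Vdash N_{n-1}p$ by~(\ref{kor:f}), and $\kor \not\Vdash \neg N_{n-1}p$ gives $\kor \Vdash N_{n-1}p$ by~(\ref{pneg:r}); we then apply the complementary claim of the induction hypothesis at $\kor$, exactly as in the $\Asim$ case.

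There is no real obstacle here; the argument is a matter of bookkeeping. The one point requiring attention is keeping the two parameters aligned --- the parity of the prefix $N_{n-1}$ and whether we are asserting ``$p$ forced somewhere above $u$'' or ``$p$ refuted somewhere above $u$'' --- so that after stripping one negation the induction hypothesis invoked is precisely the claim whose conclusion matches the target. The presence of the ``$\lub w > \kor$'' disjunct in every one of the four conclusions is exactly what lets the $\neg$-case collapse to the root, so nothing delicate is involved.
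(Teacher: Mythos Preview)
Your proof is correct, and it takes a genuinely different route from the paper's. The paper proves each of the four claims separately by induction on $k$, stripping \emph{two} negations at a time; thus the induction step for claim~\ref{f:odd} considers $N_{2(k+1)+1}p$ and splits into four cases according to the outermost pair $\Asim\Asim$, $\Asim\neg$, $\neg\Asim$, $\neg\neg$, each reducing to the same claim at level $k$. By contrast, you run a simultaneous induction on the total length $n$ and strip \emph{one} negation per step, so the forcing claim at parity $n$ reduces to the refutation claim at parity $n-1$ and vice versa. This halves the case analysis (two outermost connectives instead of four pairs) at the cost of having to package all four statements together, since removing a single negation flips parity. Your handling of the $\neg$ case---using the uniform ``$w>\kor$'' disjunct when $u$ is imaginary, and Fact~\ref{f:kor} to collapse to the root otherwise---is exactly right and in fact mirrors what the paper does inside each of its four sub-cases. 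Overall your decomposition is a bit more economical; the paper's has the minor advantage that each claim stands on its own.
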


\begin{proof}
We prove only one of the two most complex cases which is~\ref{f:odd}, others can be proven in an analogous way.

Let $\mathcal{M}$ be an r-model and $u$ an arbitrary world in this model. The proof is by induction on $k$.

Let $k=0$ and let $A= N_1 p$. We have to consider two cases:

Case 1. $A= \Asim p$.\\
Assume that $u \Vdash \Asim p$. Then for all $w \geq u$ we have $w \not \Vdash p$ and the claim trivially follows.

Case 2. $A = \neg p$.\\
From the assumption that $u \Vdash \neg p$ it follows that either $u = \kor$ and $u \not \Vdash p$ or $u > \kor$. In both cases the claim follows.

For the induction step, let $k>0$ and let $A=N_{2(k+1)+1} p$. Now, we consider following cases: 

Case 1. $A= \Asim \Asim N_{2k+1} p$.\\
Assume that $$u \Vdash \Asim \Asim N_{2k+1} p.$$ Then for every  $v \geq u$ there exists a world $v' \geq v$ such that $v' \Vdash N_{2k+1} p$. Hence, by the induction hypothesis, there is $w \geq v'$ with $$w \not \Vdash p \lub w > \kor.$$
In particular it follows that the claim holds.

Case 2. $A= \Asim \neg N_{2k+1} p$.\\
From the assumption  $$u \Vdash \Asim \neg N_{2k+1} p$$ we get that for every world $u' \geq u$ we have $u' \not \Vdash \neg N_{2k+1} p$. From point~\ref{pneg:r} of Fact~\ref{f:kor} it follows that $\kor$ is the only world of the model and $$\kor \Vdash N_{2k+1} p,$$ which, by the induction hypothesis, obviously implies the claim.

Case 3. $A= \neg \Asim N_{2k+1} p$.\\
Then from $$u \Vdash \neg \Asim N_{2k+1} p$$ it follows that, in particular, $$u \not \Vdash \Asim N_{2k+1} p \lub u > \kor.$$ If the latter holds, we are done. The former implies that there is $u' \geq u$ with $$u' \Vdash N_{2k+1} p$$ and the claim follows from the induction hypothesis.

Case 4. $A= \neg \neg N_{2k+1} p$.\\
Assume that $$u \Vdash \neg \neg N_{2k+1} p.$$ Then it follows that either $u > \kor$, in which case the claim holds, or $u= \kor$ and $u \not \Vdash \neg N_{2k+1} p$. If so, from Fact~\ref{f:kor} we have $\kor \Vdash N_{2k+1} p$ and again, by the~induction hypothesis, we get the claim.
\end{proof} 

Let us consider an implication of two different n-formulae $N_{k} p \rightarrow N_{m} p$. Such a formula is never valid if the evenness  of $k$ and $m$ is not the same. Indeed, suppose that
\[ \kor \not \Vdash N_{k} p \rightarrow N_{m} p \] and let $k = 2j_1, m = 2j_2+1$ for some $j_1,j_2 \in \mathbb{N} $. Then there exists a world $u \geq \kor$ such that
\[ u \Vdash N_{2j_1} p \oraz u \not \Vdash N_{2j_2+1} p. \]
The countermodel for such a formula is $\bullet$. The other case of evenness is symmetrical with a countermodel $\circ$.

In fact for an implication of two different n-formulae we never need a countermodel of height greater than $2$. That is because the implication which bounds two n-formulae is the only connective that in building a~countermodel requires creating a new world possibly above the root. The search for a countermodel for a formula $$A = N_{k} p \rightarrow N_{m} p$$ always starts with the assumption that $\kor \not \Vdash A$ which is equivalent to the~fact that there is a world $u \geq \kor$ such that $$u \Vdash N_{k} p \oraz u \not \Vdash N_{m} p.$$ 

The minimal countermodel for an n-formula is either $\circ$ or $\bullet$. Intuitionistic negation influences only the forcing of a subformula at the given world, regardless what kind of the world it is. Considering the~\pneg\, of a~formula requires discriminating the root from imaginary worlds. In other worlds, forcing of the variable in a given world is one of the two ways of distinguishing worlds in a model. The other one, as was already said, is refuting the \pneg \, of a formula. 
\begin{example}\label{ex:jed}
Let $A = \neg \neg \Asim  p$ and $B = \neg \Asim  \Asim  p$. We will show that $A \preceq B$ and $B \not \preceq A$.

Suppose that there exists an r-model in which $A \rightarrow B$ is not satisfied i.e. this formula is refuted in some world of this model. Showing a contradiction will give us $A \preceq B$. 

Refutation of a formula in some world of an r-model means that it cannot be forced in the root of the model: $\kor \not \Vdash A \rightarrow B$, so there exists a~world $u$ possibly above the root, such that
$u \Vdash \neg \neg \Asim  p \oraz u \not \Vdash \neg \Asim  \Asim  p.$
The~first condition says that either the world $u$ is imaginary or $u = \kor $ and the~variable $p$ is not forced in any node of the model. The latter condition implies that $u = \kor$ and for every world $w$ in the model exists $w' \geq w$ with $w' \Vdash p$, hence a~contradiction. It follows that $A \preceq B$.

Suppose that for some r-model $\kor \not \Vdash B \rightarrow A$. Then again there exists a~world $u$ possibly above the root in which $u \Vdash \neg \Asim  \Asim p \oraz u \not \Vdash \neg \neg \Asim  p.$ Refuting the~formula $\neg \neg \Asim p$ in some world of a model means that this world is the root and there must exists a node somewhere in the model in which $p$ is forced. If so, forcing of the formula $\neg \Asim \Asim p$ implies that there exists a~world $w \geq \kor$ for which $w' \not \Vdash p$ for every $w' \geq w$, so in particular $\kor \not \Vdash p$. The least possible countermodel is:
\[ \xymatrix{\circ\ar@{-}[dr] & {} & \bullet\ar@{-}[dl]\\ &\circ& } \]
The first part of the example shows that in looking for a countermodel the~{\em kind} of a current world (either the root or imaginary) is important. The~second part shows the difference of forcing of the variable in imaginary worlds. No bigger models would be necessary, as there is only one variable to validate and only one intuitionistic implication. 
\end{example}
In other worlds forcing or refuting an intuitionistic negation of a~formula in a world possibly above the root cannot extort creating a new world properly above. Forcing of \pneg\, of a formula in the root depends on the forcing of the variable, above the root it is always forced. The case of refuted \pneg\, of a formula sends us back to the root.

For a monadic negational formula there are two possible countermodels of the~height of $1$ and four possible countermodels of the height of $2$:

$$
  \xymatrix{\\ \circ} \quad
  \xymatrix{\\ \bullet}\quad 
  \xymatrix{\circ\ar@{-}[d]\\ \circ}\quad
  \xymatrix{\bullet\ar@{-}[d]\\ \circ}\quad
  \xymatrix{\bullet\ar@{-}[d]\\ \bullet}\quad
  \xymatrix{\circ\ar@{-}[dr] & {} & \bullet\ar@{-}[dl]\\ &\circ& }
$$
$$
  \xymatrix{\mz}\quad  
  \xymatrix{\mj}\quad 
  \xymatrix{\mzz}\quad
  \xymatrix{\mzj}\quad
  \xymatrix{\mjj}\quad
  \xymatrix{&\vau&}
$$
Let us denote the set $\{\mz, \mj, \mzz, \mzj, \mjj, \vau\}$ of these models by $\es$.

We are interested in finding the upper bound of the number of non equivalent n-formulae. We are looking for n-formulae $ A $ and $ B $ such that $ A \not \preceq B $. It means that there exists a model for the formula $A$ in which we can refute the~other n-formula.  To every implication $A \rightarrow B$ we can assign a~subset of $\es$ of models in which this formula is refuted. Such subset cannot contain both of the~models $\circ$ and $\bullet$, because contradiction is not expressible in our language. There are $2^{5}$ such subsets of $\es$, so there are at most $32$ non equivalent monadic n-formulae. 

For a given n-formula every model from the set $\es$ can be either a model or a~countermodel. The existence of only six possible models for monadic n-formulae enables to characterize these formulae in terms of their models. Let '+' stand for 'valid', and '-' for 'not valid'.  For example for the formula $p$ we have
\begin{center}
\begin{tabular}{|c|c|c|c|c|c|c|}
\hline \rule[-2ex]{0pt}{5.5ex}
$N_0$ & $\mz$ & $\mj$ & $\mzz$ & $\mzj$ & $\mjj$ & $\vau$ \\ 
\hline\rule[-2ex]{0pt}{5.5ex}  $p$ 
& - & + & - & - & + & - 
\\ 
\hline 

\end{tabular} 
\end{center}
and for n-formulae of the length $2$ we have
\begin{center}
\begin{tabular}{|c|c|c|c|c|c|c|}
\hline\rule[-2ex]{0pt}{5.5ex} 
$N_2$ & $\mz$ & $\mj$ & $\mzz$ & $\mzj$ & $\mjj$ & $\vau$ \\ 
\hline\rule[-2ex]{0pt}{5.5ex}  
$\Asim \Asim  p$ 
& - & + & - & + & + & -
\\ 
\hline\rule[-2ex]{0pt}{5.5ex}  
$\Asim \neg p$ 
&  - & +  & -  & - & - & -  
\\ 
\hline\rule[-2ex]{0pt}{5.5ex}  
$\neg\Asim  p$ 
& - & + & - & + & + & +
\\ 
\hline\rule[-2ex]{0pt}{5.5ex}  
$\neg\neg p$ 
& - & +  & -  & - & +  & - 
\\ 
\hline 
\end{tabular} 
\end{center}
It can be seen that there are two countermodels for a formula $\Asim  \Asim  p \rightarrow \Asim  \neg p$, namely $\mzj$ and $\mjj$. However, we could also see that sets of models for $p$ and $\neg \neg p$ are the same, and it is known, that \pneg\, is not involutive. Obviously some informations are missing.

In the case of $\IPL$, when we look for a countermodel of a formula $A\rightarrow B$, we can always start with the assumption that it is already falsified in the root of the model, i.e. we can assume that $\kor \Vdash A \oraz \kor \not \Vdash B$. That is because in $\IPL$ the root of the model is no different from other worlds. We already saw that in $\ICL$ there is a considerate difference between the root and any imaginary world, e.g. the root is the only world of the model in which $\neg A$ can be refuted. It is not enough to look for countermodels of an n-formula starting in the root of an r-model. The forcing of it has to be also revised in those possible imaginary worlds of models $\mzz, \mzj, \mjj \oraz \vau$. It is not sufficient to examine sets of models and countermodels for each n-formula. Its validity has to be also verified in every \textit{pseudosubmodel}. Here by the pseudosubmodel we mean any generated submodel in the sense of $\IPL$ which is not an r-model, that is which consists imaginary worlds only. Let us denote $i(\mz), i(\mj)$ and $i(\mz , \mj)$ pseudosubmodels of $\mzz, \mjj, \vau$ respectively. The pseudosubmodel of $\mzj$ is the same as that of $\mjj$. 

For $p$ and $\neg \neg p$ we have
\begin{center}
\begin{tabular}{|c|c|c|c|}
\hline \rule[-2ex]{0pt}{5.5ex}
 & $i(\mz)$ & $i(\mj)$ & $i(\mz , \mj)$ \\ 
\hline\rule[-2ex]{0pt}{5.5ex}  $p$ 
& - & + & + \\ 
\hline\rule[-2ex]{0pt}{5.5ex}  
$\neg\neg p$ 
& + & + & + \\ 
\hline 
\end{tabular} 
\end{center}
Indeed, assume that \[ \kor \not \Vdash \neg \neg p \rightarrow p.\] Then in some world $u \geq \kor$ we have $u \Vdash \neg \neg p \oraz u \not \Vdash p.$ Forcing of a double \pneg \, of the variable in a world possibly above the root implies that $$\mathrm{either}\, u >\kor \lub u = \kor \oraz u \Vdash p,$$ 
hence the countermodel for a formula $\neg \neg p \rightarrow p$ is $\mzz$.

\section{Semantic characterisation of n-formulae}
It was already emphasised that no n-formula with odd number of negations can be equivalent to n-formula with even number of negations. Therefore all properties concerning equivalences between n-formulae are divided into two cases: for odd and even length of n-formulae. The property of reduction of negations with respect to sequences of one type of negation is straightforward and it follows from the fact that both $\Asim  \Asim  \Asim  p \equiv \Asim  p$ and $\neg \neg \neg p \equiv \neg p$. 
\begin{prp}\label{eq:skr}
For any $k \in \mathbb{N}$ we have:
\begin{enumerate}
\item\label{sim:even} $\Asim ^{(2k+2)} p \equiv \Asim  \Asim  p$,
\item\label{sim:odd} $\Asim ^{(2k+1)} p \equiv \Asim  p$,
\item\label{neg:even} $\neg^{(2k+2)} p \equiv \neg \neg p$,
\item\label{neg:odd} $\neg^{(2k+1)} p \equiv \neg p$.
\end{enumerate}
\end{prp}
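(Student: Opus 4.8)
\emph{Proof proposal.} The plan is to reduce everything to the two base cases $\Asim^3 p \equiv \Asim p$ and $\neg^3 p \equiv \neg p$ and then to propagate them by Theorem~\ref{extensionality} together with an easy induction on $k$. (As the paragraph preceding the statement indicates, this is the whole content of the proof.)

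First I would settle the base cases semantically. For $\Asim^3 p \equiv \Asim p$ this is the familiar intuitionistic fact: the inclusion $\Asim p \preceq \Asim\Asim\Asim p$ is the instance of the valid schema $A \rightarrow \Asim\Asim A$ with $A = \Asim p$, while for $\Asim\Asim\Asim p \preceq \Asim p$ one assumes $\kor \not\Vdash \Asim\Asim\Asim p \rightarrow \Asim p$, picks a world $u$ with $u \Vdash \Asim\Asim\Asim p$ and $u \not\Vdash \Asim p$, uses the latter to find $v \geq u$ with $v \Vdash p$, hence $v \Vdash \Asim\Asim p$, contradicting $u \Vdash \Asim\Asim\Asim p$. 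For $\neg^3 p \equiv \neg p$ I would appeal directly to Fact~\ref{f:kor}: in every r-model $\neg A$ is forced at every imaginary world, and at the root $\kor \Vdash \neg A \wtw \kor \not\Vdash A$. Applying this three times shows that $\neg\neg\neg p$ and $\neg p$ are forced in exactly the same worlds of any r-model, so $\models \neg\neg\neg p \leftrightarrow \neg p$.

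Next I would run the induction. Since $\equiv$ agrees with provable equivalence (the soundness/completeness alluded to around Theorem~\ref{extensionality}), the base cases read $\vdash \Asim\Asim\Asim p \leftrightarrow \Asim p$ and $\vdash \neg\neg\neg p \leftrightarrow \neg p$, so Theorem~\ref{extensionality} may be used to substitute under any string of negations. For part~\ref{sim:odd} I induct on $k$: the case $k=0$ is trivial, and $\Asim^{2(k+1)+1} p = \Asim\Asim(\Asim^{2k+1} p)$, which by the induction hypothesis and extensionality applied with the context $\Asim\Asim s$ is equivalent to $\Asim\Asim\Asim p$, hence to $\Asim p$ by the base case. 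Part~\ref{sim:even} then follows with no further induction: $\Asim^{2k+2} p = \Asim(\Asim^{2k+1} p) \equiv \Asim\Asim p$ by~\ref{sim:odd} and extensionality with the context $\Asim s$. Parts~\ref{neg:even} and~\ref{neg:odd} are obtained verbatim, with $\neg$ in place of $\Asim$ and with the base case $\neg\neg\neg p \equiv \neg p$.

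There is no serious obstacle. The only point needing a little care is the base case for $\perp$-negation: unlike intuitionistic negation, $\neg$ is not antitone in the naive sense and its clause mentions imaginary worlds, so one should not try to mimic the intuitionistic argument but instead read off the behaviour of $\neg A$ from Fact~\ref{f:kor} (forced at every imaginary world, and the Boolean negation of $A$ at the root) and observe that iterating a map of this shape three times returns it to itself. One should also note explicitly that extensionality is stated for $\vdash$ whereas the proposition is phrased with $\models$ and $\equiv$, so a one-line remark identifying provable equivalence with semantic equivalence is wanted before the induction.
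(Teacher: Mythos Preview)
Your proposal is correct and follows essentially the same approach as the paper: establish the two base cases $\Asim^{3}p \equiv \Asim p$ and $\neg^{3}p \equiv \neg p$ and then appeal to extensionality. The only cosmetic differences are that the paper handles $\neg^{3}p \equiv \neg p$ by the standard refute-a-countermodel argument for each implication (using Fact~\ref{f:kor} points~\ref{kor:f}--\ref{pneg:r}) rather than your direct world-by-world verification, and the paper leaves the induction and the $\vdash$/$\models$ bridge implicit where you spell them out; neither amounts to a different route.
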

\begin{proof}
Ad \ref{sim:even} and \ref{sim:odd}\\
Equivalence between $\Asim  p$ and $\Asim  \Asim  \Asim  p$ is an intuitionistic tautology. From this and extensionality we get the thesis.

Ad \ref{neg:even} and \ref{neg:odd}\\
Let us see that $\neg \neg \neg p \equiv \neg p$.
Suppose that $\not \models \neg \neg \neg p \rightarrow \neg p$, so there exists an~r-model in which $\kor \not \Vdash \neg \neg \neg p \rightarrow \neg p$. Then there exists a world $u \geq \kor$ such that
\[ u \Vdash \neg \neg \neg p \oraz u \not \Vdash \neg p.\] According to point~\ref{pneg:r} of Fact~\ref{f:kor}, refuting the  \pneg \, of a formula in some world of a model sends us back to the root, so the latter condition implies that $u= \kor$ and $u \Vdash p$. If so, from the~former condition and point \ref{kor:f} of Fact \ref{f:kor} it follows that in the root the~formula $\neg \neg p$ is refuted. A~contradiction, since this means that $\kor \not \Vdash p$.

Suppose that the formula $\neg p \rightarrow \neg \neg \neg p$ is not valid. Then in the root of some r-model we have $\kor \not \Vdash \neg p \rightarrow \neg \neg \neg p$. Hence for a world $u$ possibly above the root we have
\[ u \Vdash \neg p \oraz u \not \Vdash \neg \neg \neg p. \]
Similar argumentation as in previous case with respect to the second condition shows that $u = \kor$ and $u \Vdash \neg \neg p$. From points \ref{kor:f} and \ref{kor:r} of Fact \ref{f:kor} we have that $\kor \Vdash p$. On the other hand, since $u = \kor$, from $u \Vdash \neg p$ follows $\kor \not \Vdash p$, a~contradiction. Therefore $\neg \neg \neg p \equiv \neg p$ and the claim follows from extensionality.
\end{proof}

In \cite{LMICL} Liang and Miller distinguished a formula $\Asim  \neg A \rightarrow A$. It enables to emulate the $\mathcal{C}$ control operator. From point \ref{simneg2} of the following proposition it follows that the formula $\Asim  \neg p$ is a representative of a wide class of equivalent n-formulae of the~form $\Asim  \neg N_{2k} p$. Point \ref{simneg3} shows a similar result for a class of odd n-formulae.

\begin{prp}\label{simneg:skr}
For any $k \in \mathbb{N}$ we have:
\begin{enumerate}
\item\label{simneg2} $\Asim  \neg N_{2k} p \equiv \Asim  \neg p$,
\item\label{simneg3} $\Asim  \neg N_{2k+1} p \equiv \Asim  \neg \neg p$.
\end{enumerate}
\end{prp}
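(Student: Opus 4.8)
The plan is to argue semantically, exploiting the fact that a formula of the shape $\Asim \neg A$ is extremely rigid. First I would prove the following \emph{singleton lemma}: for every n-formula $A$, every r-model $\mathcal{M} = \langle W, \kor, \leq, \Vdash \rangle$ and every $u \in W$,
\[ u \Vdash \Asim \neg A \wtw W = \{\kor\} \oraz \kor \Vdash A. \]
For the nontrivial implication, $u \Vdash \Asim \neg A$ means $w \not\Vdash \neg A$ for all $w \geq u$; by point~\ref{pneg:r} of Fact~\ref{f:kor} each such $w$ must equal $\kor$ (equivalently, since $\neg A$ is forced at every imaginary world), so $u = \kor$ and $W = \{\kor\}$; then $\kor \not\Vdash \neg A$ together with point~\ref{kor:r} of Fact~\ref{f:kor} gives $\kor \Vdash A$. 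The converse is immediate from Definition~\ref{for:fragm}: if $W=\{\kor\}$ and $\kor\Vdash A$ then $\kor\not\Vdash\neg A$, hence $\kor\Vdash\Asim\neg A$.

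Second, I would record the behaviour of iterated negations over the one-point model $\mathcal{M}_0 = \langle \{\kor\}, \kor, \leq, \Vdash \rangle$: there both negations collapse to classical negation, since $\kor \Vdash \neg B \wtw \kor \not\Vdash B$ by point~\ref{kor:f} of Fact~\ref{f:kor}, and $\kor \Vdash \Asim B \wtw \kor \not\Vdash B$ because $\kor$ is the only world. A one-line induction on $k$ then gives, in $\mathcal{M}_0$,
\[ \kor \Vdash N_{2k} p \wtw \kor \Vdash p \quad\text{and}\quad \kor \Vdash N_{2k+1} p \wtw \kor \not\Vdash p. \]

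Combining the two facts finishes the proof. For~\ref{simneg2}, the singleton lemma gives, for an arbitrary r-model and world $u$, that $u \Vdash \Asim \neg N_{2k} p$ holds iff $W = \{\kor\}$ and $\kor \Vdash N_{2k} p$, i.e.\ iff $W = \{\kor\}$ and $\kor \Vdash p$, i.e.\ iff $u \Vdash \Asim \neg p$; hence $\models \Asim \neg N_{2k} p \rightarrow \Asim \neg p$ and $\models \Asim \neg p \rightarrow \Asim \neg N_{2k} p$, so $\Asim \neg N_{2k} p \equiv \Asim \neg p$. For~\ref{simneg3}, the same computation shows $u \Vdash \Asim \neg N_{2k+1} p$ iff $W = \{\kor\}$ and $\kor \not\Vdash p$, while applying the singleton lemma to $A = \neg p$ and using point~\ref{kor:f} of Fact~\ref{f:kor} ($\kor \Vdash \neg p \wtw \kor \not\Vdash p$) shows that $u \Vdash \Asim \neg \neg p$ holds under exactly the same condition; therefore $\Asim \neg N_{2k+1} p \equiv \Asim \neg \neg p$.

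The only genuine step is the singleton lemma, and within it the observation that $w \not\Vdash \neg A$ at every $w \geq u$ collapses the model to its root — this is precisely where the asymmetry between $\Asim$ and $\neg$ is used, through Fact~\ref{f:kor}; everything afterwards is bookkeeping. An alternative, more syntactic route would run by induction on $k$ using extensionality (Theorem~\ref{extensionality}) and Proposition~\ref{eq:skr}, splitting on the last two negations of $N_{2k} p$; but the mixed cases (e.g.\ showing $\Asim \neg \Asim \Asim B \equiv \Asim \neg B$) seem to require the same singleton analysis, so the direct argument above looks cleaner.
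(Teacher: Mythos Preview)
Your proof is correct and rests on the same key observation as the paper's: forcing $\Asim\neg A$ at any world collapses the r-model to its root. The paper, however, runs four separate reductio arguments (one per implication direction in each of~\ref{simneg2} and~\ref{simneg3}) and, once the model has collapsed to $\{\kor\}$, invokes Proposition~\ref{forcing} to extract the forcing value of $p$ from $\kor\Vdash N_{2k}p$ or $\kor\not\Vdash N_{2k}p$. You instead isolate the collapse as a reusable \emph{singleton lemma} and then note directly that in the one-point model both $\Asim$ and $\neg$ act classically, so $\kor\Vdash N_{2k}p\wtw\kor\Vdash p$ by a one-line induction. This is a genuine simplification: it makes the argument uniform, handles both directions of both equivalences at once, and removes the dependence on Proposition~\ref{forcing}. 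The paper's route has the minor virtue of reusing an already-proved general fact, but your packaging is cleaner.
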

\begin{proof}
Ad \ref{simneg2}\\
Assume there exists an r-model $\mathcal{M}$ in which $\kor \not \Vdash \Asim  \neg N_{2k} p \rightarrow \Asim  \neg p$. Thus there exists a world $u \geq \kor$ for which we have
\[ u \Vdash \Asim  \neg N_{2k} p \oraz u \not \Vdash \Asim  \neg p, \] that is in all worlds above the world $u$ the formula $\neg N_{2k} p$ is refuted, which means that the possible countermodel consists of the root only and that $\kor \Vdash N_{2k} p$. Thus from point~\ref{f:even} of Proposition~\ref{forcing} we have $\kor \Vdash p$. On the other hand, refuting the formula $\Asim \neg p$ in the root in particular implies that $\kor \not \Vdash p$. A contradiction.

Let $\kor$ be the root of some r-model in which a formula $\Asim  \neg p \rightarrow \Asim  \neg N_{2k} p$ is not satisfied. Then for some world $u \geq \kor$ we claim that
\[u \Vdash \Asim  \neg p \oraz u \not \Vdash \Asim  \neg N_{2k} p.\] As could be already seen in the case of reverse implication, forcing of the~formula $\Asim \neg p$ in arbitrary world implies that the model is reduced to the root and $\kor \Vdash p$. From point~\ref{r:even} of Proposition~\ref{forcing} with respect to the second condition and the fact that the model comprises the root only, it yields that $\kor \not \Vdash p$. From which we get a~contradiction and as a result the claim follows.

Ad \ref{simneg3}\\
Suppose that $\not \models \Asim  \neg N_{2k+1} p \rightarrow \Asim  \neg \neg p$. So there exists an r-model such that for the root of it we have $$\kor \not \Vdash \Asim  \neg N_{2k+1} p \rightarrow \Asim  \neg \neg p$$ which implies that there exists a world $u \geq \kor$ such that
\[u \Vdash \Asim  \neg N_{2k+1} p \oraz u \not \Vdash \Asim  \neg \neg p. \] Similar argumentation as in the proof of point~\ref{simneg2} and application of point~\ref{f:odd} of Proposition~\ref{forcing} to the first condition implies that for all worlds $u' \geq u$ we have $$u' = \kor \oraz u' \not \Vdash p.$$ If the root is the only world of the model, then in particular $u \not \Vdash \Asim \neg \neg p$ implies $u = \kor \Vdash p$, a contradiction. 

Let us consider an r-model in which $\Asim  \neg \neg p \rightarrow \Asim  \neg N_{2k+1} p$ is not satisfied. Then this formula is refuted in the root of this model and there exists a world $u \geq \kor$ such that
\[ u \Vdash \Asim  \neg \neg p \oraz u \not \Vdash \Asim  \neg N_{2k+1} p. \] The first condition implies that in every world $u' \geq u$ the formula $\neg \neg p$ is refuted and that means $$u' = \kor \oraz u' \not \Vdash p,$$ whereas from the second condition follows that there is a world $v \geq u$ such that $v \Vdash \neg N_{2k+1} p$.
If so, by application of the point~\ref{r:odd} of Proposition~\ref{forcing} and the fact of the root being the only world of the model, we have in particular $$v' = \kor \oraz v' \Vdash p,$$ a contradiction. Thus the thesis holds.
\end{proof}
Proposition~\ref{eq:skr} and Proposition~\ref{simneg:skr} show that in many cases we can reduce an n-formula of a~greater length to a formula of length less than $4$.

In fact the formula $\Asim  \neg p$ implies every n-formula with even number of negations. That is because forcing of the formula $\Asim \neg A$ at a given world sends us back to the root of the model in which $A$ must be forced. Thus we get a~minimal element with respect to relation $\preceq$ for the subset of even n-formulae. Analogically the formula $\Asim  \neg \neg p$ is the minimal element for the~subset of odd n-formulae. These two facts are corollaries from points \ref{simneg2} and \ref{simneg3} of Proposition~\ref{simneg:skr} and the following proposition: 
\begin{prp}\label{lem:MIN}
For every $k, m \in \mathbb{N}$ following implications hold:
\begin{enumerate}
\item\label{sn:even} $\Asim  \neg N_{2k} p \rightarrow N_{2m} p$,
\item\label{sn:odd} $\Asim  \neg N_{2k+1} p \rightarrow N_{2m+1} p$.
\end{enumerate}
\end{prp}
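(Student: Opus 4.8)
The plan is to refute the existence of a countermodel for each implication, exploiting the fact that the antecedent $\Asim \neg N_j p$ can only be forced in the trivial one-world model. So the first step is the following observation: if $u \Vdash \Asim \neg N_j p$ in an r-model, then for every $w \geq u$ we have $w \not\Vdash \neg N_j p$, and by point~\ref{pneg:r} of Fact~\ref{f:kor} this forces $w = \kor$ and $w \Vdash N_j p$. Taking $w = u$ gives $u = \kor$; and since every world of the model lies above $\kor = u$, the model has no imaginary worlds at all, that is, it consists of the root alone, with $\kor \Vdash N_j p$.

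For point~\ref{sn:even}, suppose towards a contradiction that the formula $\Asim \neg N_{2k} p \rightarrow N_{2m} p$ has a countermodel. Then there is a world $u \geq \kor$ with $u \Vdash \Asim \neg N_{2k} p \oraz u \not\Vdash N_{2m} p$. By the observation above, the model reduces to $\{\kor\}$, $u = \kor$, and $\kor \Vdash N_{2k} p$. Applying point~\ref{f:even} of Proposition~\ref{forcing} to $\kor \Vdash N_{2k} p$ yields some $w \geq \kor$ with $w \Vdash p \lub w > \kor$; since $\kor$ is the only world, the second disjunct fails and $\kor \Vdash p$. Applying point~\ref{r:even} of Proposition~\ref{forcing} to $\kor \not\Vdash N_{2m} p$ yields some $w \geq \kor$ with $w \not\Vdash p \lub w > \kor$; again $w = \kor$, so $\kor \not\Vdash p$. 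This is the desired contradiction.

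Point~\ref{sn:odd} runs along the same lines: from $u \Vdash \Asim \neg N_{2k+1} p$ the model collapses to $\{\kor\}$ with $\kor \Vdash N_{2k+1} p$, and then point~\ref{f:odd} of Proposition~\ref{forcing} yields $\kor \not\Vdash p$, while point~\ref{r:odd} applied to $\kor \not\Vdash N_{2m+1} p$ yields $\kor \Vdash p$ --- again a contradiction. Hence neither implication has a countermodel, so both are valid.

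The only delicate point --- and it is a short one --- is the collapse step: one must use point~\ref{pneg:r} of Fact~\ref{f:kor} not just at $u$ but at \emph{every} $w \geq u$, since ``every world above $u$ equals $\kor$'' is precisely what rules out imaginary worlds and lets the ``$w > \kor$'' disjuncts supplied by Proposition~\ref{forcing} be discarded. Once the model is known to be a single point, the remainder is a routine application of Proposition~\ref{forcing}, with the parity bookkeeping making the two parts mirror images of one another.
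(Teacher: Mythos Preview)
Your proof is correct and follows the same overall strategy as the paper: show that the antecedent $\Asim\neg N_j p$ forces the model to collapse to the single world $\kor$ with $\kor \Vdash N_j p$, and then derive a contradiction from $\kor \not\Vdash N_{2m} p$ (resp.\ $N_{2m+1} p$). The only difference is in how the final contradiction is obtained: the paper simply juxtaposes $\kor \Vdash N_{2k} p$ and $\kor \not\Vdash N_{2m} p$ and declares a contradiction, implicitly using that in a one-world model both negations behave classically so every even n-formula is equivalent to $p$; you instead invoke Proposition~\ref{forcing} (points~\ref{f:even}/\ref{r:even} and \ref{f:odd}/\ref{r:odd}) to extract $\kor \Vdash p$ and $\kor \not\Vdash p$ explicitly. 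Your version makes the last step more transparent, but the argument is essentially the paper's.
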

\begin{proof}
Ad \ref{sn:even}\\
Assume $\kor \not \Vdash \Asim  \neg N_{2k} p \rightarrow N_{2m} p$ for the root of some r-model $\mathcal{M}$. Then in some world $u \geq \kor$ we have $u \Vdash \Asim  \neg N_{2k} p \oraz u \not \Vdash N_{2m} p$. From the~former it follows that the root is the only world of the model $\mathcal{M}$ and $\kor \not \Vdash \neg N_{2k} p$. According to point~\ref{kor:r} of Fact~\ref{f:kor} it follows that $\kor \Vdash N_{2k} p$. On~the~other hand $\kor \not \Vdash N_{2m} p$, a~contradiction. 

Ad \ref{sn:odd}\\
Let $\mathcal{M}$ be an r-model in which $\kor \not \Vdash \Asim  \neg N_{2k+1} p \rightarrow N_{2m+1} p$. Then there exists a world $u \geq \kor$ such that $u \Vdash \Asim  \neg N_{2k+1} p \oraz u \not \Vdash N_{2m+1} p$. Hence, due to a similar reasoning as in point~\ref{sn:even}, since the root is the only world of the model we have $\kor \Vdash N_{2k+1} p \oraz \kor \not \Vdash N_{2m+1} p$, a contradiction. 
\end{proof}
It is worth noting that n-formulae $\Asim  \Asim  \neg \neg p$ and $\Asim  \Asim  \neg p$ are maximal elements with respect to the relation $\preceq$ for subsets of even n-formulae and odd n-formulae, respectively.
\begin{prp}\label{lem:MAX}
For any $k \in \mathbb{N}$ following implications are valid:
\begin{enumerate}
\item\label{LA:even} $N_{2k} p \rightarrow \Asim  \Asim  \neg \neg p$,
\item\label{LA:odd} $N_{2k+1} p \rightarrow \Asim  \Asim  \neg p$.
\end{enumerate}
\end{prp}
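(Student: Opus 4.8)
The plan is to argue by contradiction, in the style of the preceding proofs. For part~\ref{LA:even}, assume $\not\models N_{2k}p \rightarrow \Asim\Asim\neg\neg p$; then some r-model has a world $u \geq \kor$ with $u \Vdash N_{2k}p$ and $u \not\Vdash \Asim\Asim\neg\neg p$. From $u \not\Vdash \Asim\Asim\neg\neg p$ I would extract a world $w \geq u$ with $w \Vdash \Asim\neg\neg p$, hence in particular $w \not\Vdash \neg\neg p$. Since $\neg\neg p$ is a $\perp$-negation (of $\neg p$), the final point of Fact~\ref{f:kor} tells us it is forced in every imaginary world; so $w$ is not imaginary, i.e. $w = \kor$, and then $\kor \Vdash \Asim\neg\neg p$ says that \emph{no} world of the model forces $\neg\neg p$, which by the same observation means the model has no imaginary worlds at all and therefore consists of the root alone. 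Moreover $\kor \not\Vdash \neg\neg p$ yields, via points~\ref{kor:f} and~\ref{kor:r} of Fact~\ref{f:kor}, that $\kor \not\Vdash p$; thus the model is exactly $\mz$, and $u = \kor$ because $\kor = w \geq u \geq \kor$.

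It then remains to contradict $\kor \Vdash N_{2k}p$ in this one-node model, and this is immediate from point~\ref{f:even} of Proposition~\ref{forcing}: that point would give some $w \geq \kor$ with $w \Vdash p$ or $w > \kor$, whereas the root is the only world and $\kor \not\Vdash p$. This settles part~\ref{LA:even}.

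Part~\ref{LA:odd} runs along identical lines with $\neg p$ in place of $\neg\neg p$: from $u \not\Vdash \Asim\Asim\neg p$ one obtains $w = \kor$ with $\kor \Vdash \Asim\neg p$, so --- again because a $\perp$-negation is forced in every imaginary world --- the model is a single node; point~\ref{kor:r} of Fact~\ref{f:kor} now gives $\kor \Vdash p$, so the model is $\mj$ and $u = \kor$. Then $\kor \Vdash N_{2k+1}p$ contradicts point~\ref{f:odd} of Proposition~\ref{forcing}, since in $\mj$ there is no $w \geq \kor$ with $w \not\Vdash p$ or $w > \kor$.

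I do not anticipate a real obstacle: the one idea needed is that $\Asim$ applied to a $\perp$-negated formula can be forced only when the model collapses to its root, because $\perp$-negations persist in every imaginary world; after that Proposition~\ref{forcing} closes both cases mechanically, and the even/odd split is a plain duality (swap $\neg\neg p \leftrightarrow \neg p$, $\mz \leftrightarrow \mj$, and points~\ref{kor:f}/\ref{kor:r} and~\ref{f:even}/\ref{f:odd}).
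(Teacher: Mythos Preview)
Your proposal is correct and follows essentially the same route as the paper's own proof: both argue by contradiction, both unpack $u \not\Vdash \Asim\Asim\neg\neg p$ (resp.\ $u \not\Vdash \Asim\Asim\neg p$) to conclude that the model collapses to the single-node model $\mz$ (resp.\ $\mj$), and both finish by invoking point~\ref{f:even} (resp.\ point~\ref{f:odd}) of Proposition~\ref{forcing}. Your write-up is simply a bit more explicit about why the collapse occurs (via Fact~\ref{f:kor}), whereas the paper states this more tersely.
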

\begin{proof}
Ad \ref{LA:even}\\
Let $\mathcal{M}$ be an r-model in which $\kor \not \Vdash N_{2k} p \rightarrow \Asim  \Asim  \neg \neg p.$ Then there exist a~world $u$ possibly above the root such that $u \Vdash N_{2k} p \oraz u \not \Vdash \Asim  \Asim  \neg \neg p$. The latter implies that the model $\mathcal{M}$ consists of only one element, namely the root and $\kor \not \Vdash p$. If there is no worlds properly above the root, then from $u \Vdash N_{2k}p$, according to point~\ref{f:even} of Proposition~\ref{forcing} it follows in particular that $\kor \Vdash p$, a~contradiction.

Ad \ref{LA:odd}\\
Suppose that there exists an r-model $\mathcal{M}$ in which $\kor \not \Vdash N_{2k+1} p \rightarrow \Asim  \Asim  \neg p$. Then there exists a world $u \geq \kor $ such that $u \Vdash N_{2k+1} p \oraz u \not \Vdash \Asim  \Asim  \neg p$. 
According to point~\ref{f:odd}~of~Proposition~\ref{forcing} it follows that there exists a world $w \geq u$ such that $w \not \Vdash p \lub w > \kor$. On the other hand, refuting the formula $\Asim \Asim \neg p$ in an arbitrary world of the model means that there are no worlds properly above the root and $\kor \Vdash p$. We have a contradiction and hence the~claim holds.
\end{proof}

For every $k \in \mathbb{N}$ there are $2^{k}$ n-formulae $N_{k} p$. It was already said that there are at most $32$ non equivalent n-formulae. Procedure of finding these n-formulae is reduced to checking if the relation $N_{k} p \preceq N_{m} p$ holds. It would be arduous if it weren't for the fact that we can characterize a~negational formula in terms of its models and countermodels. Instead of checking satisfiability of formulae of the form $N_{k} p \rightarrow N_{m} p$ for subsequent n-formulae $N_k p, N_m p$, it is sufficient to compare sets of models and countermodels, including pseudosubmodels, for these n-formulae. For a given n-formula $N_{k}p$ let $\es^{+}(N_{k} p)$ be the subset of $\es \cup \{i(\mz), i(\mj), i(\mz, \mj)\}$ of models in which n-formula $N_{k} p$ is valid. The relation $N_{k} p \preceq N_{m} p$ between two n-formulae holds if and only if $\es^{+}(N_{k} p) \subseteq \es^{+}(N_{m} p)$. Complete tables of models for a given n-formula up to the length $5$ are given in the Appendix.

Semantically all proofs of following facts are similar to the proof of Proposition~\ref{eq:skr}. They are not informative, thus omitted. 

As a representative for every equivalence class we choose a formula of the smallest length. We start with three equivalence classes of the simplest n-formulae, namely $$[p]_\equiv, [\Asim p]_\equiv \oraz [\neg p]_\equiv$$ and we will shortly discuss subsequent n-formulae.
\begin{fct}\label{N2:irr}
All n-formulae of length $2$ are pairwise non equivalent. 
\end{fct}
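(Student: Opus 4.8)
The plan is to enumerate the four n-formulae of length $2$, namely $\Asim\Asim p$, $\Asim\neg p$, $\neg\Asim p$ and $\neg\neg p$, and for each of the six unordered pairs exhibit a model from $\es$ that validates one member of the pair while refuting the other. By the criterion recalled just before the statement, $N_k p \preceq N_m p$ holds iff $\es^{+}(N_k p) \subseteq \es^{+}(N_m p)$, so producing such a separating model for every pair shows that no two of the four classes coincide, i.e. the four formulae are pairwise non equivalent.

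First I would record the behaviour of each of the four formulae on the six models of $\es$; this is exactly the second table of the preceding section, which in turn is justified by routine computation from Definition~\ref{for:fragm} and Fact~\ref{f:kor}. Reading off the columns, three models already suffice: the model $\mzj$ separates $\{\Asim\Asim p,\neg\Asim p\}$ (where it is ${+}$) from $\{\Asim\neg p,\neg\neg p\}$ (where it is ${-}$), hence kills the four ``mixed'' pairs; the model $\vau$ separates $\neg\Asim p$ (${+}$) from $\Asim\Asim p$ (${-}$); and the model $\mjj$ separates $\neg\neg p$ (${+}$) from $\Asim\neg p$ (${-}$). Together these cover all $\binom{4}{2}=6$ pairs.

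To keep the argument self-contained I would spell out each separating instance as an explicitly refuted implication. For example, $\neg\Asim p \rightarrow \Asim\Asim p$ fails in $\vau$: by Fact~\ref{f:kor} the formula $\neg\Asim p$ is forced at every imaginary world, and at the root it is forced because $\kor\not\Vdash\Asim p$ (as $p$ is forced at the right leaf), so $\kor\Vdash\neg\Asim p$, whereas $\Asim\Asim p$ is refuted at the root since there is an imaginary world at which $p$ is forced. The $\mjj$-separations of $\Asim\neg p$ from $\neg\neg p$ are the analogous three-line checks of $\Asim\neg p$ and $\neg\neg p$ at the root of $\mjj$, and the $\mzj$-separations amount to evaluating all four formulae at the root of $\mzj$. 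Each verification uses only the forcing clauses for $\Asim$ and $\neg$ and Fact~\ref{f:kor}.

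There is essentially no obstacle here beyond bookkeeping. The one point deserving a remark is that, in contrast with the comparison of $p$ and $\neg\neg p$, the restriction of the tables to $\es$ alone already distinguishes all four length-$2$ formulae — the models $\mzj$, $\vau$ and $\mjj$ witness every pair — so no passage to the pseudosubmodels $i(\mz)$, $i(\mj)$, $i(\mz,\mj)$ is needed in this case.
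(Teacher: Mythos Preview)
Your approach is correct and is essentially what the paper does: the paper omits the proof and defers to the validity tables, and your explicit separating models $\mzj$, $\vau$, $\mjj$ are exactly what those tables show for the four length-$2$ formulae. One small slip in your worked example: in $\vau$, the reason $\Asim\Asim p$ is refuted at the root is that there is an imaginary world (the left leaf) at which $\Asim p$ is forced, i.e.\ $p$ is \emph{not} forced there---not, as you wrote, because $p$ is forced.
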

As none of n-formulae $N_2 p$ is reducible, we can distinguish four different equivalence classes: $$[\Asim \Asim p]_\equiv, [\Asim \neg p]_\equiv, [\neg \Asim p]_\equiv, [\neg \neg p]_\equiv.$$
There are $8$ n-formulae $N_3 p$. From Proposition~\ref{eq:skr} if follows that $\Asim  \Asim  \Asim  p$ and $\neg \neg \neg p$ are reducible to $\Asim  p$ and $\neg p$ respectively. 
\begin{fct}\label{N3:irr}
There are only two irreducible formulae $N_3 p$ that are not equivalent to any other n-formulae of the length 3, namely $\neg \Asim  \Asim  p \oraz \neg \neg \Asim  p.$ 
\end{fct}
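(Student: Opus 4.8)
The plan is to sort the eight n-formulae $N_3 p$ into $\equiv$-classes, leaning on the propositions already proved to dispose of six of them and handling only one equivalence directly; throughout I would invoke the criterion recalled above, that $N_k p\preceq N_m p$ holds exactly when $\es^{+}(N_k p)\subseteq\es^{+}(N_m p)$ (models and pseudosubmodels together), so that the model computations stay implicit. As noted before the statement, Proposition~\ref{eq:skr} gives $\Asim\Asim\Asim p\equiv\Asim p$ and $\neg\neg\neg p\equiv\neg p$, so these two formulae are reducible. Next, point~\ref{simneg3} of Proposition~\ref{simneg:skr}, applied with $k=0$ to the inner formula $\Asim p$, gives $\Asim\neg\Asim p\equiv\Asim\neg\neg p$, so each of these is equivalent to \emph{another} n-formula of length $3$ and hence is not one of the two sought formulae. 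Four formulae remain to be examined: $\Asim\Asim\neg p$, $\neg\Asim\neg p$, $\neg\Asim\Asim p$ and $\neg\neg\Asim p$.

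The one genuinely new point is the equivalence $\Asim\Asim\neg p\equiv\neg\Asim\neg p$, which I would prove by the countermodel chase used for Proposition~\ref{eq:skr}. Suppose one of the two implications fails; then some world $u\geq\kor$ forces the antecedent and refutes the consequent. By point~\ref{pneg:r} of Fact~\ref{f:kor}, refuting the consequent forces $u=\kor$; and forcing $\Asim\neg p$ somewhere in the model means $\neg p$ is refuted in every world, which by the same point is possible only if the model is the one-point model $\mj$ with $\kor\Vdash p$. On $\mj$ the formulae $\Asim\Asim\neg p$ and $\neg\Asim\neg p$ take opposite values at the root, contradicting the choice of $u$. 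Hence $\Asim\Asim\neg p$ and $\neg\Asim\neg p$ form a single class and are likewise excluded.

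It then remains to show that $\neg\Asim\Asim p$ and $\neg\neg\Asim p$ are irreducible and equivalent to no other n-formula, in particular to none of the length-$\le 2$ formulae, nor to $\Asim\neg\neg p$, $\Asim\Asim\neg p$, nor to each other. Using Fact~\ref{f:kor} one checks that both $\neg\Asim\Asim p$ and $\neg\neg\Asim p$ are valid on $\mz$, on $\mzz$ and on the three pseudosubmodels, and that both fail on $\mj$, $\mzj$ and $\mjj$. This already separates them from every class of length $\le 2$: $p$, $\Asim\Asim p$, $\Asim\neg p$, $\neg\Asim p$ and $\neg\neg p$ are valid on $\mj$, while the two target formulae are not; $\Asim p$ is not valid on $i(\mj)$, while they are; and $\neg p$ is valid on $\mzj$, while they are not. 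Hence both target formulae are irreducible. The same observations also separate them from $\Asim\neg\neg p$, which is valid on $\mz$ only (so not on $\mzz$), and from $\Asim\Asim\neg p$, which is valid on $\mzj$. Finally, $\neg\Asim\Asim p$ and $\neg\neg\Asim p$ are told apart by $\vau$: there the root refutes $\Asim\Asim p$, because of the imaginary $p$-free leaf, and hence forces $\neg\Asim\Asim p$, so $\neg\Asim\Asim p$ is valid on $\vau$; while the root forces $\neg\Asim p$, because some imaginary world forces $p$, and hence refutes $\neg\neg\Asim p$, so $\neg\neg\Asim p$ is not valid on $\vau$. Combined with the first two paragraphs, this shows that $\neg\Asim\Asim p$ and $\neg\neg\Asim p$ are exactly the two irreducible $N_3 p$ equivalent to no other n-formula of length $3$.

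The main obstacle is organisational rather than conceptual: the only place a fresh argument is needed is the equivalence $\Asim\Asim\neg p\equiv\neg\Asim\neg p$, and everything else is a bookkeeping check that the relevant rows of the model table are pairwise distinct. The delicate part of that bookkeeping is that the pseudosubmodel columns, not just the six models of $\es$, are essential --- for example $\neg\neg\Asim p$ and $\Asim p$ agree on all of $\es$ and are distinguished only by the pseudosubmodel $i(\mj)$.
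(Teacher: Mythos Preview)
Your overall approach matches the paper's: the paper does not spell out a proof of this Fact, saying only that such proofs ``are similar to the proof of Proposition~\ref{eq:skr} \ldots\ not informative, thus omitted'' and pointing to the validity tables in the Appendix; you carry out exactly that programme, combining the $\es^{+}$-inclusion criterion with Propositions~\ref{eq:skr} and~\ref{simneg:skr} to dispose of six of the eight formulae, and then reading off the table to separate $\neg\Asim\Asim p$ and $\neg\neg\Asim p$ from everything else. The bookkeeping in your last two paragraphs is accurate and agrees with the Appendix.

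There are, however, two slips in your direct argument for $\Asim\Asim\neg p \equiv \neg\Asim\neg p$. First, the sentence ``refuting the consequent forces $u=\kor$'' via Fact~\ref{f:kor}.\ref{pneg:r} is only immediate when the consequent begins with $\neg$, i.e., for the direction $\Asim\Asim\neg p \rightarrow \neg\Asim\neg p$; in the other direction the consequent is $\Asim\Asim\neg p$, and $u=\kor$ only follows \emph{after} your next step has shrunk the model to the single world $\kor$. Second, and more important, on $\mj$ the two formulae do \emph{not} take opposite values: since $\kor\Vdash p$ we have $\kor\not\Vdash\neg p$, hence $\kor\Vdash\Asim\neg p$, and therefore $\kor\not\Vdash\Asim\Asim\neg p$ and $\kor\not\Vdash\neg\Asim\neg p$ --- both are refuted. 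The contradiction is that in either direction the \emph{antecedent} was supposed to be forced at $\kor$, yet on $\mj$ neither formula is. With these two corrections your argument goes through.
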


\begin{fct}\label{eq:3}
For n-formulae of the length $3$ we have following equivalences:
\begin{enumerate}
\item\label{eq:jed} $\Asim  \Asim  \neg p \equiv \neg \Asim  \neg p$,
\item\label{eq:dw} $\Asim  \neg \Asim  p \equiv \Asim  \neg \neg p$.
\end{enumerate} 
\end{fct}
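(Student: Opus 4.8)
The plan is to prove each of the two equivalences in the same style as Proposition~\ref{eq:skr}: for every one of the four implications involved, assume it is refuted in the root of some r-model, unfold the clauses of Definition~\ref{for:fragm} together with Fact~\ref{f:kor}, and extract a contradiction. The recurring mechanism --- the same one behind Proposition~\ref{simneg:skr} and Proposition~\ref{lem:MIN} --- is that forcing a formula of the form $\Asim \neg C$ at any world collapses the model to its root and forces $C$ there (since any imaginary world $i > \kor$ would force $\neg C$, violating ``$w \not\Vdash \neg C$ for all $w \geq u$''), while refuting $\neg C$ anywhere, by point~\ref{pneg:r} of Fact~\ref{f:kor}, also returns us to the root. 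Consequently every countermodel attempt below reduces at once to the one-point model $\mz$ or $\mj$, where the residual conditions become contradictory constraints on whether $\kor \Vdash p$.

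For part~\ref{eq:jed}, to establish $\models \Asim \Asim \neg p \rightarrow \neg \Asim \neg p$ I would assume $u \Vdash \Asim \Asim \neg p$ and $u \not\Vdash \neg \Asim \neg p$ for some $u \geq \kor$; point~\ref{pneg:r} of Fact~\ref{f:kor} turns the second assumption into ``$u = \kor$ and $\kor \Vdash \Asim \neg p$'', which contradicts the first assumption evaluated at $w = \kor$. For the converse, assuming $u \Vdash \neg \Asim \neg p$ and $u \not\Vdash \Asim \Asim \neg p$, the latter produces some $v \geq u$ with $v \Vdash \Asim \neg p$; as no imaginary world can fail to force $\neg p$, this forces $v = u = \kor$, so $\kor \Vdash \Asim \neg p$, while $\kor \Vdash \neg \Asim \neg p$ gives $\kor \not\Vdash \Asim \neg p$ by point~\ref{kor:f} of Fact~\ref{f:kor} --- a contradiction.

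For part~\ref{eq:dw}, the cleanest route is to observe that the two formulae have literally the same models: using point~\ref{pneg:r} of Fact~\ref{f:kor}, $u \Vdash \Asim \neg \Asim p$ holds precisely when $W = \{\kor\}$ and $\kor \Vdash \Asim p$, i.e.\ $\kor \not\Vdash p$; and $u \Vdash \Asim \neg \neg p$ holds precisely when $W = \{\kor\}$ and $\kor \Vdash \neg p$, i.e.\ again $\kor \not\Vdash p$ by point~\ref{kor:f}. Hence both implications hold; rendered as countermodel arguments each is routine, the assumption $u \Vdash \Asim \neg(\cdot)$ collapsing the model to $\mz$ with $\kor \not\Vdash p$ and the refutation of the companion formula forcing $\kor \Vdash p$.

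I do not anticipate a genuine obstacle here, since every relevant countermodel has height at most $1$; the only point needing a line of care is the justification that $u \Vdash \Asim \neg C$ really eliminates all imaginary worlds, so that the phrase ``the model is reduced to the root'' is legitimate. Once the four implications are in hand, Fact~\ref{eq:3} follows, and by Theorem~\ref{extensionality} each equivalence lifts, if needed, to its analogue with $p$ replaced by an arbitrary n-formula of the matching parity.
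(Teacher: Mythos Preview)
Your proposal is correct and follows precisely the route the paper itself indicates: the paper omits the proof of Fact~\ref{eq:3}, stating only that ``semantically all proofs of following facts are similar to the proof of Proposition~\ref{eq:skr}'' and deferring to the validity tables in the Appendix, and your countermodel arguments are exactly of that type. One small remark: your part~\ref{eq:dw} is also an immediate instance of Proposition~\ref{simneg:skr}.\ref{simneg3} with $N_{2k+1}p = \Asim p$, so you could cite that instead of redoing the collapse-to-$\mz$ argument; either way the content is the same.
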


From Proposition~\ref{eq:skr}.\ref{sim:odd}, Proposition~\ref{eq:skr}.\ref{neg:odd}, Fact~\ref{N3:irr} and Fact~\ref{eq:3} it follows that there are only 4 irreducible and not equivalent n-formulae $N_3 p$. We choose following representatives: $$[\Asim  \neg \neg p]_\equiv, [\neg \neg \Asim  p]_\equiv, [\neg \Asim  \Asim  p]_\equiv \,\oraz\, [\Asim  \Asim  \neg p]_\equiv. $$

Most of n-formulae $N_4 p$ could be reduced to some n-formula $N_2 p$ using Proposition~\ref{eq:skr} and Proposition~\ref{simneg:skr}. Remaining n-formulae are divided into three equivalence classes.
\begin{fct}\label{eq:4}
For n-formulae of the length $4$ we have following equivalences: 
\begin{enumerate} 
\item\label{eq:czt} $\Asim  \Asim  \neg \Asim  p \equiv \neg \Asim  \neg \Asim  p \equiv \Asim  \Asim  \neg \neg p \equiv \neg \Asim  \neg \neg p$,
\item\label{eq:pi} $\neg \neg \Asim  \neg p \equiv \neg \Asim  \Asim  \neg p$.
\end{enumerate}
\end{fct}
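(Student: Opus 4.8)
The plan is to treat the two items separately and to squeeze as much as possible out of Theorem~\ref{extensionality} and Fact~\ref{eq:3} before turning to a countermodel search.

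Point~\ref{eq:pi} should follow from extensionality alone. Point~\ref{eq:jed} of Fact~\ref{eq:3} gives $\Asim\Asim\neg p \equiv \neg\Asim\neg p$; applying Theorem~\ref{extensionality} with the context $\neg s$ then yields $\neg\Asim\Asim\neg p \equiv \neg\neg\Asim\neg p$, which is exactly point~\ref{eq:pi}.

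For point~\ref{eq:czt} I would first reduce the chain of four formulae to a single equivalence. Point~\ref{eq:dw} of Fact~\ref{eq:3} gives $\Asim\neg\Asim p \equiv \Asim\neg\neg p$; applying extensionality with the context $\Asim s$ yields the equivalence of the first and third formulae of point~\ref{eq:czt}, and with the context $\neg s$ the equivalence of the second and fourth, so it only remains to prove $\Asim\Asim\neg\neg p \equiv \neg\Asim\neg\neg p$. These two differ only in their outermost negation, both applied to $X := \Asim\neg\neg p$, and $\models \Asim X \rightarrow \neg X$ is immediate for every $X$ (if $u \Vdash \Asim X$ then $X$ is refuted at all worlds $\geq u$, whence $u \Vdash \neg X$). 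For the converse I would assume $\kor \not\Vdash \neg X \rightarrow \Asim X$ in some r-model, getting a world $u \geq \kor$ with $u \Vdash \neg X$ and $u \not\Vdash \Asim X$; the latter produces $w \geq u$ with $w \Vdash \Asim\neg\neg p$, i.e.\ $\neg\neg p$ is refuted at every world $\geq w$. By point~\ref{pneg:r} of Fact~\ref{f:kor}, read with $\neg p$ in the role of $A$, the formula $\neg\neg p$ can be refuted only at the root, so every world $\geq w$ equals $\kor$; hence $w = \kor$, the model has no imaginary worlds, and $u = w = \kor$. Now point~\ref{kor:f} of Fact~\ref{f:kor} turns $u \Vdash \neg X$ into $\kor \not\Vdash X = \Asim\neg\neg p$, contradicting $w \Vdash \Asim\neg\neg p$. (Equivalently, one may argue once and for all that $\Asim\neg\neg p$ is never forced at an imaginary world --- because $\neg\neg p$ is forced at every imaginary world by the last point of Fact~\ref{f:kor} --- and that whenever $X$ has this property, $\Asim X$ and $\neg X$ are forced at exactly the same worlds of every r-model.)

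The only genuinely semantic step is the last equivalence of point~\ref{eq:czt}, and its crux --- the point at which the $\perp$-negation really enters --- is that $\neg\neg p$ is refutable only at the root, which forces any r-model witnessing $w \Vdash \Asim\neg\neg p$ to collapse to the single node $\kor$, where intuitionistic negation and $\perp$-negation coincide. No countermodel of height greater than~$1$ should be needed, in keeping with the remarks opening this section; the main thing to watch is the bookkeeping of the extensionality substitutions in point~\ref{eq:czt}, namely that the three equivalences so obtained --- first with third, second with fourth, third with fourth --- genuinely connect all four formulae.
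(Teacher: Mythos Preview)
Your proof is correct. The paper itself does not give a proof of this fact: the sentence preceding Fact~\ref{N2:irr} announces that ``semantically all proofs of following facts are similar to the proof of Proposition~\ref{eq:skr}'' and are omitted, and the intended verification is simply to read off the validity tables in the Appendix (all four formulae in point~\ref{eq:czt} have the identical row $-+++++ \mid +++$, and the two formulae in point~\ref{eq:pi} share the row $-+---- \mid +++$).

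Your route is different and more structured: you use extensionality together with Fact~\ref{eq:3} to dispose of point~\ref{eq:pi} entirely and to collapse the four-term chain in point~\ref{eq:czt} to the single equivalence $\Asim\Asim\neg\neg p \equiv \neg\Asim\neg\neg p$, and then you give a clean semantic argument for that one. This buys an explanation of \emph{why} the equivalences hold (the inner formula $\Asim\neg\neg p$ is never forced at an imaginary world, so the two outer negations agree), whereas the paper's table-check merely certifies that they do. The one thing worth noting explicitly is that Theorem~\ref{extensionality} is phrased in terms of $\vdash$ while $\equiv$ is defined via $\models$; the paper bridges these by invoking soundness and completeness of LJC, so your use of extensionality on $\equiv$ is legitimate.
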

\begin{fct}
There is only one irreducible n-formula of length $4$ that is not equivalent to any other n-formula $N_4 p$, namely $\neg \neg \Asim  \Asim  p$.
\end{fct}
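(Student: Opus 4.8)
The plan is to reduce the statement to a small semantic check. By Propositions~\ref{eq:skr} and~\ref{simneg:skr}, of the sixteen n-formulae $N_4 p$ exactly nine are reducible to an n-formula of length at most $2$, namely $\Asim^4 p$, $\Asim^3\neg p$, $\Asim\neg\Asim\Asim p$, $\Asim\neg\Asim\neg p$, $\Asim\neg\neg\Asim p$, $\Asim\neg^3 p$, $\neg\Asim^3 p$, $\neg^3\Asim p$ and $\neg^4 p$; by Fact~\ref{eq:4} six of the remaining seven belong to the classes $[\Asim\Asim\neg\neg p]_\equiv$ and $[\neg\neg\Asim\neg p]_\equiv$, of sizes four and two; and the last one is $\neg\neg\Asim\Asim p$. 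Hence every $N_4 p$ other than $\neg\neg\Asim\Asim p$ is either reducible or equivalent to another $N_4 p$, so it suffices to show that $\neg\neg\Asim\Asim p$ is irreducible and is equivalent neither to $\Asim\Asim\neg\neg p$ nor to $\neg\neg\Asim\neg p$; by the reduction just made, irreducibility amounts to inequivalence with each of the eleven representatives of the equivalence classes of length at most $3$.

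Next I would determine $\es^{+}(\neg\neg\Asim\Asim p)$. For any r-model $\mathcal{M}$, validity of $\neg\neg\Asim\Asim p$ in $\mathcal{M}$ means $\kor \Vdash \neg\neg\Asim\Asim p$, which by point~\ref{kor:f} of Fact~\ref{f:kor} is equivalent to $\kor \not\Vdash \neg\Asim\Asim p$, and this, by point~\ref{kor:r}, is equivalent to $\kor \Vdash \Asim\Asim p$; so $\neg\neg\Asim\Asim p$ is valid in precisely the same r-models as $\Asim\Asim p$. On the other hand, in a pseudosubmodel every world is imaginary, hence forces $\perp$, hence forces every \pneg\,, so $\neg\neg\Asim\Asim p$ is valid in each of $i(\mz)$, $i(\mj)$, $i(\mz,\mj)$. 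Together with the table for $\Asim\Asim p$ this gives
\[ \es^{+}(\neg\neg\Asim\Asim p)=\{\mj,\mzj,\mjj,i(\mz),i(\mj),i(\mz,\mj)\}; \]
in particular $\neg\neg\Asim\Asim p$ is valid in $\mzj$ and $\mj$ but refuted in $\mz$, $\mzz$ and $\vau$.

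It then remains to compare this with the $\es^{+}$ sets of the thirteen formulae listed above, all of which are recorded in the Appendix. Among those thirteen only $\neg p$, $\Asim\Asim p$, $\neg\Asim p$, $\Asim\Asim\neg p$ and $\Asim\Asim\neg\neg p$ are valid in $\mzj$, so $\mzj$ separates $\neg\neg\Asim\Asim p$ from the other eight at once; and $\neg\neg\Asim\Asim p$ is separated from $\neg p$ by $\mz$, from $\Asim\Asim p$ by the pseudosubmodel $i(\mz)$ (there $\Asim p$ holds at the lone imaginary node, so $\Asim\Asim p$ fails, while the \pneg\, $\neg\neg\Asim\Asim p$ holds), from $\neg\Asim p$ by $\vau$, from $\Asim\Asim\neg p$ by $\mz$, and from $\Asim\Asim\neg\neg p$ by $\mzz$. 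In each case one of the two implications is already refuted, so $\neg\neg\Asim\Asim p$ is equivalent to none of these; being inequivalent to every n-formula of length at most $3$ it is irreducible, and being inequivalent as well to representatives of the two nontrivial classes of Fact~\ref{eq:4} it is equivalent to no other $N_4 p$.

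I expect the real obstacle to be not any single countermodel but the completeness of the bookkeeping: one must check that the partition of the sixteen $N_4 p$ according to Propositions~\ref{eq:skr}, \ref{simneg:skr} and Fact~\ref{eq:4} is exhaustive, and --- more delicately --- that testing the six r-models of $\es$ together with the three pseudosubmodels really detects every non-equivalence. The separation of $\neg\neg\Asim\Asim p$ from $\Asim\Asim p$ is exactly where a pseudosubmodel cannot be dispensed with, which is also the reason the naive bound of $32$ coming from subsets of $\es$ alone is not attained.
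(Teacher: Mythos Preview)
Your proposal is correct and follows essentially the same approach as the paper: reduce the question to a comparison of the sets $\es^{+}(\cdot)$ over the six r-models and three pseudosubmodels, using Propositions~\ref{eq:skr} and~\ref{simneg:skr} together with Fact~\ref{eq:4} to organise the sixteen $N_4 p$ beforehand. The paper in fact omits the proof entirely (``not informative, thus omitted'') and simply defers to the validity tables in the Appendix; your argument is just a more explicit version of that check, including the observation that the separation of $\neg\neg\Asim\Asim p$ from $\Asim\Asim p$ genuinely requires a pseudosubmodel.
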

 Recapitulating, we can distinguish three representatives of irreducible and not equivalent n-formulae of the length $4$ which will denote equivalence classes: $$[\neg \Asim  \Asim  \neg p]_\equiv, [\neg \neg \Asim  \Asim  p]_\equiv \,\oraz \,[\Asim  \Asim  \neg \neg p]_\equiv.$$
\begin{fct}\label{eq:5}
There are only $4$ irreducible n-formulae of the length $5$. These formulae are equivalent:
$\neg \Asim  \Asim  \neg \Asim  p \equiv \neg \Asim  \Asim  \neg \neg p \equiv \neg \neg \Asim  \neg \Asim  p \equiv \neg \neg \Asim  \neg \neg p.$
\end{fct}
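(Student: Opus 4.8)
The plan is to split the proof into two parts. First I would show that, of the $2^{5}=32$ n-formulae of length $5$, every one except the four listed is reducible, i.e.\ equivalent to an n-formula of length $<5$. Second I would show that the four listed are pairwise equivalent and that their common class does not occur among the classes of length $\le 4$ already identified, so that length $5$ contributes exactly one new equivalence class. Because Propositions~\ref{eq:skr} and~\ref{simneg:skr} and Facts~\ref{eq:3} and~\ref{eq:4} all propagate into subformulae by extensionality (Theorem~\ref{extensionality}), the first part and the equivalence half of the second are finite bookkeeping; genuine semantic work enters only in the irreducibility step.

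\textbf{Reductions.} I would write a length-$5$ n-formula as a word $w=w_{1}w_{2}w_{3}w_{4}w_{5}$ over $\{\Asim,\neg\}$ applied to $p$, with $w_{1}$ outermost, and split on where the letters sit. If $w=\neg^{a}\Asim^{b}$ with $a+b=5$ then $\max(a,b)\ge 3$, so a block $\neg^{3}$ or $\Asim^{3}$ occurs and Proposition~\ref{eq:skr} collapses $w$ to length $\le 3$. Otherwise $w$ has an $\Asim\neg$ factor; if one is at position $1$ or $2$, then the corresponding subformula $\Asim\neg N_{j}p$ with $j\ge 2$ equals $\Asim\neg p$ or $\Asim\neg\neg p$ by Proposition~\ref{simneg:skr}, so $w$ shortens to length $\le 4$. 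The remaining words have their first $\Asim\neg$ at position $3$ or $4$, and a short sub-analysis finishes them: kill $\Asim^{3}$- and $\neg^{3}$-blocks by Proposition~\ref{eq:skr}, and rewrite the subformulae $\Asim\Asim\neg p\equiv\neg\Asim\neg p$ and $\Asim\neg\Asim p\equiv\Asim\neg\neg p$ by Fact~\ref{eq:3}. This reduces every length-$5$ n-formula other than $\neg\Asim\Asim\neg\Asim p$, $\neg\Asim\Asim\neg\neg p$, $\neg\neg\Asim\neg\Asim p$, $\neg\neg\Asim\neg\neg p$ to a shorter one.

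\textbf{Equivalence of the four.} I would chain three uses of extensionality. From $\Asim\neg\Asim p\equiv\Asim\neg\neg p$ (Fact~\ref{eq:3}.\ref{eq:dw}), placing this subformula in the contexts $\neg\Asim\Asim(\,\cdot\,)$ and $\neg\neg\Asim(\,\cdot\,)$ yields $\neg\Asim\Asim\neg\Asim p\equiv\neg\Asim\Asim\neg\neg p$ and $\neg\neg\Asim\neg\Asim p\equiv\neg\neg\Asim\neg\neg p$; from $\Asim\Asim\neg\neg p\equiv\neg\Asim\neg\neg p$ (Fact~\ref{eq:4}.\ref{eq:czt}), the context $\neg(\,\cdot\,)$ yields $\neg\Asim\Asim\neg\neg p\equiv\neg\neg\Asim\neg\neg p$. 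Chaining these, all four n-formulae lie in one class, say $[\neg\neg\Asim\neg\neg p]_{\equiv}$.

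\textbf{Irreducibility, and the main obstacle.} It remains to verify that $[\neg\neg\Asim\neg\neg p]_{\equiv}$ is distinct from each class of length $\le 4$; this is the step I expect to be the real work. Using the criterion $N_{k}p\preceq N_{m}p\iff\es^{+}(N_{k}p)\subseteq\es^{+}(N_{m}p)$, it suffices to compute the profile $\es^{+}(\neg\neg\Asim\neg\neg p)$ over $\es$ together with the pseudosubmodels $i(\mz),i(\mj),i(\mz,\mj)$ --- directly from Definition~\ref{for:fragm}, Fact~\ref{f:kor} and Proposition~\ref{forcing} --- and to check that it matches no shorter class's profile; for example it already differs from that of $\neg\neg\Asim\Asim p$ at the model $\mjj$, in which $\neg\neg\Asim\Asim p$ is satisfied while $\neg\neg\Asim\neg\neg p$ is refuted at the root. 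The difficulty here is purely combinatorial: one cannot shortcut the checks by ``cancelling'' outer negations, since $\neg$ is not injective on $\equiv$-classes, so each relevant (in)equivalence must be re-established at the level of profiles, and several shorter classes must be ruled out individually. The complete tables in the Appendix settle this; everything else above is routine given Propositions~\ref{eq:skr},~\ref{simneg:skr} and Facts~\ref{eq:3},~\ref{eq:4}.
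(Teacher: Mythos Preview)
Your plan matches the paper's own treatment: the paper likewise reduces the remaining 28 length-5 words via Propositions~\ref{eq:skr} and~\ref{simneg:skr} together with extensionality (singling out only $\neg\neg\Asim\Asim\neg p\equiv\Asim\Asim\neg p$ as the one ``not obvious case'', which you handle via $\Asim\Asim\neg p\equiv\neg\Asim\neg p$), and defers the equivalence and irreducibility checks to the validity tables in the Appendix. Two cosmetic slips worth fixing: in your equivalence step the correct outer contexts are $\neg\Asim(\,\cdot\,)$ and $\neg\neg(\,\cdot\,)$, not $\neg\Asim\Asim(\,\cdot\,)$ and $\neg\neg\Asim(\,\cdot\,)$, since $\Asim\neg\Asim p$ already sits at depth~2 inside $\neg\Asim\Asim\neg\Asim p$ (your stated contexts would produce length-6 formulae); and in the irreducibility step your sample comparison $\neg\neg\Asim\Asim p$ is even, hence trivially inequivalent to the odd $\neg\neg\Asim\neg\neg p$ --- the substantive comparisons are with the six odd classes of length $\leq 3$, which the Appendix tables do distinguish (e.g.\ from $[\neg\neg\Asim p]_\equiv$ at the model $\mzz$, and from $[\Asim\neg\neg p]_\equiv$ at the pseudosubmodels).
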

The equivalence class of these n-formulae will be denoted by $$[\neg \Asim \Asim \neg \neg p]_\equiv.$$ All remaining n-formulae $N_{5} p$ are reducible to some formulae $N_{3} p$. Equivalences between n-formulae $N_{5} p$ and $N_{3} p$ are based on Proposition~\ref{eq:skr}, Proposition~\ref{simneg:skr} and extensionality. There is only one not obvious case namely $\neg \neg \Asim  \Asim  \neg p \equiv \Asim  \Asim  \neg p$. Indeed, let us suppose that $\not \models \neg \neg \Asim  \Asim  \neg p \rightarrow \Asim  \Asim  \neg p$. Then there exists some model $\mathcal{M}$ in which $\kor \not \Vdash \neg \neg \Asim  \Asim  \neg p \rightarrow \Asim  \Asim  \neg p$. That is there exists some world $u \geq \kor$ such that $u \Vdash \neg \neg \Asim  \Asim  \neg p \oraz u \not \Vdash \Asim  \Asim  \neg p$. From the second condition follows that there exists a world $v \geq u$ such that for every world $w \geq v$ we have $w \not \Vdash p$. This means that the world $w$ is the root of the model $\mathcal{M}$ and the only world of it and $\kor \Vdash p$. On the other hand we have $\kor \Vdash \neg \neg \Asim  \Asim  \neg p$ which implies that $\kor \not \Vdash p$, a contradiction. Let us suppose that $\not \models \Asim \Asim \neg p \rightarrow \neg \neg \Asim \Asim \neg p$. Then again for the root of some model $\mathcal{M}$ we have $\kor \not \Vdash \Asim \Asim \neg p \rightarrow \neg \neg \Asim \Asim \neg p$. Hence there is a world $u \geq \kor$ such that $u \Vdash \Asim \Asim \neg p \oraz u \not \Vdash \neg \neg \Asim \Asim \neg p$. The second part implies that $u = \kor \oraz u \not \Vdash \Asim \Asim \neg p$, a contradiction.
\begin{cor}\label{skr6}
Every n-formula $N_6 p$ is reducible to some n-formula $N_k p$ of length $k \leq 4$.
\end{cor}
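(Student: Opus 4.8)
The plan is to lean on the classification of length-$5$ n-formulae already established. Write an arbitrary n-formula of length $6$ as $N_6 p = X M$, where $X \in \{\Asim,\neg\}$ is the outermost negation and $M$ is an n-formula of length $5$. By Fact~\ref{eq:5} together with the remarks immediately preceding this corollary, $M$ is in one of two situations: either $M \equiv M'$ for some n-formula $M'$ of length at most $3$, or $M \equiv \neg\Asim\Asim\neg\neg p$, the representative of the sole irreducible equivalence class of length $5$.

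First I would dispose of the first situation: by extensionality (Theorem~\ref{extensionality}), $M \equiv M'$ gives $N_6 p = X M \equiv X M'$, and $X M'$, being one negation prefixed to $M'$, is an n-formula of length at most $4$. This settles every $N_6 p$ except those of the form $X M$ with $M \equiv \neg\Asim\Asim\neg\neg p$; by extensionality each such formula is equivalent to $X(\neg\Asim\Asim\neg\neg p)$, so only the two choices of $X$ remain. If $X = \Asim$, then $N_6 p \equiv \Asim\neg(\Asim\Asim\neg\neg p)$, which is of the form $\Asim\neg N_{2k}p$ with $N_{2k}p = \Asim\Asim\neg\neg p$ even, so Proposition~\ref{simneg:skr}.\ref{simneg2} gives $N_6 p \equiv \Asim\neg p$, of length $2$. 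If $X = \neg$, then $N_6 p \equiv \neg(\neg\Asim\Asim\neg\neg p)$; here I would first use Fact~\ref{eq:5} to replace the inner length-$5$ formula by the equivalent $\neg\neg\Asim\neg\neg p$, so that extensionality yields $N_6 p \equiv \neg\neg\neg(\Asim\neg\neg p)$, and then Proposition~\ref{eq:skr}.\ref{neg:odd} with extensionality collapses the triple negation to give $N_6 p \equiv \neg\Asim\neg\neg p$, of length $4$. In every case $N_6 p$ is equivalent to an n-formula of length at most $4 < 6$, hence reducible to it.

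I do not expect a genuine obstacle: once Fact~\ref{eq:5} is in hand, the corollary is bookkeeping on top of Propositions~\ref{eq:skr} and~\ref{simneg:skr}. The one step calling for a moment of care is the case $X = \neg$, where one must first rewrite $\neg\Asim\Asim\neg\neg p$ via Fact~\ref{eq:5} into a formula beginning with $\neg\neg$, so that prefixing another $\neg$ produces a collapsing triple negation --- the same manoeuvre used just above to verify $\neg\neg\Asim\Asim\neg p \equiv \Asim\Asim\neg p$. It is also worth confirming that the enumeration of possible length-$5$ subformulae above is exhaustive, but this is immediate from the completeness of the length-$5$ classification.
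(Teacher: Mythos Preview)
Your proof is correct and follows essentially the same route as the paper: strip off the outermost negation, split into the case where the remaining length-$5$ formula reduces to length at most $3$ (so prefixing one negation gives length at most $4$) and the case where it lies in the single irreducible class of Fact~\ref{eq:5}, then handle $\Asim(\neg\Asim\Asim\neg\neg p)$ via Proposition~\ref{simneg:skr}.\ref{simneg2} and $\neg(\neg\Asim\Asim\neg\neg p)$ by first passing to the equivalent representative $\neg\neg\Asim\neg\neg p$ and collapsing the resulting triple $\neg$ via Proposition~\ref{eq:skr}.\ref{neg:odd}. The only difference is cosmetic: you make the use of extensionality and the switch of representative inside the class explicit, whereas the paper leaves these implicit.
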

\begin{proof}
For every n-formula $N_6 p$ there exists a sequence $N_5$ such that either $N_6 p = \Asim N_5 p$ or $N_6 p = \neg N_5 p$. We consider two cases.

Case 1. The formula $N_5 p$ is reducible to some n-formula $N_3 p$. Then we have either $N_6 p \equiv \Asim N_3 p$ or $N_6 p \equiv \neg N_3 p$, which implies that every n-formula $N_6 p$ is reducible to some n-formula $N_4 p$.

Case 2. The formula $N_5 p$ is irreducible. Then from Fact~\ref{eq:5} follows that $N_5 p \in [\neg \Asim \Asim \neg \neg p]_\equiv.$ If $N_6 p = \Asim N_5 p$ then in particular $N_6 p \equiv \Asim \neg \Asim \Asim \neg \neg p$ and from Proposition~\ref{simneg:skr}.\ref{simneg2} follows that $N_6 p \equiv \Asim \neg p$. If $N_6 p = \neg N_5 p$ then in particular $N_6 p \equiv \neg \neg \neg \Asim \neg \neg p$. From point~\ref{neg:odd} of Proposition~\ref{eq:skr} it follows that $N_6 p \equiv \neg \Asim \neg \neg p$. 
\end{proof}
\begin{thm}\label{skr}
Every n-formula $N_k p$, for $k \geq 6$ is reducible to some n-formula of length $m \leq 5$.
\end{thm}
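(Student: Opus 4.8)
The plan is to prove Theorem~\ref{skr} by induction on $k$, using Corollary~\ref{skr6} as the base case and extensionality (Theorem~\ref{extensionality}) to drive the inductive step. All the genuinely semantic work has already been done: the behaviour of \pneg\, at the root, the reductions of Propositions~\ref{eq:skr} and~\ref{simneg:skr}, and the length-$5$ analysis feeding into Corollary~\ref{skr6}. What remains is a purely combinatorial bookkeeping argument about lengths.

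For the base case $k = 6$, Corollary~\ref{skr6} states that every n-formula $N_6 p$ is reducible to some n-formula of length at most $4$, hence in particular to one of length at most $5$.

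For the inductive step, suppose $k \geq 7$ and that the claim holds for $k-1$ (note $k-1 \geq 6$). Every n-formula $N_k p$ has the form $\Asim N_{k-1} p$ or $\neg N_{k-1} p$ for some sequence $N_{k-1}$ of negations. By the induction hypothesis there is an n-formula $N_m p$ with $m \leq 5$ such that $N_{k-1} p \equiv N_m p$. Applying extensionality to the context $A(s) = \Asim s$, respectively $A(s) = \neg s$, yields $N_k p \equiv \Asim N_m p$, respectively $N_k p \equiv \neg N_m p$; in either case this is an n-formula of length $m+1 \leq 6$. If $m+1 \leq 5$, we are done, since this formula witnesses that $N_k p$ (of length $k > 5$) is reducible to an n-formula of length at most $5$. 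If $m+1 = 6$, then the formula just obtained is an n-formula of length $6$, so by Corollary~\ref{skr6} it is equivalent to some n-formula $N_j p$ with $j \leq 4$; by transitivity of $\equiv$ we get $N_k p \equiv N_j p$ with $j \leq 4 \leq 5$. In either case $N_k p$ is reducible to an n-formula of length at most $5$, which completes the induction.

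There is no real obstacle here: the only point needing care is that the formula $N_m p$ delivered by the induction hypothesis may have length exactly $5$, so that prepending one negation produces length $6$ rather than something already of length at most $5$ — this is precisely the case absorbed by the second appeal to Corollary~\ref{skr6}. One could instead strengthen the induction hypothesis to \emph{every} n-formula of length between $6$ and $k-1$ is reducible to length at most $5$ and argue by strong induction, but the single-step version above already suffices because the reduced formula of length $6$ is handled outright by Corollary~\ref{skr6}.
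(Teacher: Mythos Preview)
Your proof is correct and follows essentially the same approach as the paper: induction on $k$ with Corollary~\ref{skr6} as the base case, peeling off the outermost negation in the inductive step, applying the induction hypothesis and extensionality, and falling back on Corollary~\ref{skr6} once more when the resulting formula has length exactly $6$. Your write-up is in fact more explicit than the paper's about invoking extensionality and transitivity, but the argument is the same.
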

\begin{proof}
By induction on $k$.
The induction base follows from Corollary~\ref{skr6}.

Assume that $k > 6$. Let $\diamond \in \{\Asim, \neg\}$. Let $N_{k+1} p = \diamond N_{k} p$. From the~induction hypothesis there exists a sequence $N_m$ such that $m \leq 5$ and $N_{k} p \equiv N_{m} p$. Thus we have $N_{k+1} p \equiv \diamond N_{m} p.$ If $m<5$, we get the thesis. Else it is the case of the~induction base.
\end{proof}

\section{Equivalence classes of n-formulae}
As a conclusion from the previous section, especially Facts~\ref{N2:irr}--~\ref{eq:5}, Corollary~\ref{skr6} and Theorem~\ref{skr} we get the exact power of the set $\mathcal{N}/_{\equiv}$. 

\begin{thm}
There are exactly $15$ (up to equivalence) pairwise not equivalent and irreducible n-formulae:
 
\begin{center}
\begin{tabular}{cc}
$p$ & \\
\rule[-2ex]{0pt}{4.5ex} 
$\Asim \Asim p$ & $\Asim p$\\
\rule[-2ex]{0pt}{4.5ex} 
$\Asim \neg p$ & $\neg p$ \\ 
\rule[-2ex]{0pt}{4.5ex} 
$\neg \Asim p$ & $\Asim \Asim \neg p$\\
\rule[-2ex]{0pt}{4.5ex} 
$\neg \neg p$ & $\Asim \neg \neg p$\\
\rule[-2ex]{0pt}{4.5ex} 
$\Asim \Asim \neg \neg p$ & $\neg \Asim \Asim p$\\
\rule[-2ex]{0pt}{4.5ex} 
$\neg \Asim \Asim \neg p$ & $\neg \neg \Asim p$\\
\rule[-2ex]{0pt}{4.5ex} 
$\neg \neg \Asim \Asim p$ & $\neg \Asim \Asim \neg \neg p$\\
\end{tabular} 
\end{center}
\end{thm}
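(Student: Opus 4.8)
The plan is to prove the statement in two halves. The first (the ``at most $15$'' half) is that every n-formula is $\equiv$-equivalent to one of the fifteen displayed representatives; the second (the ``exactly $15$'' half) is that these fifteen are pairwise non-equivalent and that each is irreducible.

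For the first half I would begin by invoking Theorem~\ref{skr} together with Corollary~\ref{skr6}, which reduce the problem to n-formulae of length at most $5$: any $N_k p$ with $k \ge 6$ is equivalent to some $N_m p$ with $m \le 5$. It then remains to run through the $2^0 + 2^1 + 2^2 + 2^3 + 2^4 + 2^5 = 63$ n-formulae of length $\le 5$ and check that each lands in one of the fifteen classes. Lengths $0$ and $1$ contribute the three classes of $p$, $\Asim p$ and $\neg p$; Fact~\ref{N2:irr} gives the four length-$2$ classes; Proposition~\ref{eq:skr}, Fact~\ref{N3:irr} and Fact~\ref{eq:3} collapse the eight length-$3$ formulae to four classes; Proposition~\ref{eq:skr}, Proposition~\ref{simneg:skr} and Fact~\ref{eq:4} dispose of length $4$, leaving the three new classes of $\Asim\Asim\neg\neg p$, $\neg\Asim\Asim\neg p$ and $\neg\neg\Asim\Asim p$; and Proposition~\ref{eq:skr}, Proposition~\ref{simneg:skr}, Fact~\ref{eq:5}, together with the explicit equivalence $\neg\neg\Asim\Asim\neg p \equiv \Asim\Asim\neg p$ established above, handle length $5$ and produce the single new class of $\neg\Asim\Asim\neg\neg p$. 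Tallying: $3 + 4 + 4 + 3 + 1 = 15$.

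For the second half I would lean on the semantic characterisation from the previous section, namely that $A \preceq B$ holds if and only if $\es^+(A) \subseteq \es^+(B)$, where $\es^+$ records validity in the six models $\mz, \mj, \mzz, \mzj, \mjj, \vau$ of $\es$ and in the three pseudosubmodels $i(\mz), i(\mj), i(\mz,\mj)$. Hence it suffices to compute, for each of the fifteen representatives, its row of ``$+$''/``$-$'' entries in this nine-column table, and to observe that the fifteen rows are pairwise distinct; this is precisely the data assembled in the Appendix. Pairwise distinctness of the rows immediately gives pairwise non-equivalence. It also gives irreducibility: each representative was chosen of least length in its class, and the first half shows that every strictly shorter n-formula is equivalent to one of the other fourteen representatives, so no representative can coincide with a shorter formula.

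The main obstacle is the verification in the second half, i.e.\ the reliable computation of $\es^+$ for all fifteen formulae --- eight of which already have length $\ge 3$ --- where one must keep careful track of the fact that \pneg\ of a formula can only be refuted at the root (Fact~\ref{f:kor}) and is automatically forced at imaginary worlds, so that the pseudosubmodel columns genuinely add information beyond $\es$ (as the $p$ versus $\neg\neg p$ comparison already showed). By contrast the first half is a bounded finite case analysis that has essentially been carried out already in Facts~\ref{N2:irr}--\ref{eq:5}, Corollary~\ref{skr6} and Theorem~\ref{skr}; its only delicate point is completeness of the length-$5$ enumeration, which is why the non-routine equivalence $\neg\neg\Asim\Asim\neg p \equiv \Asim\Asim\neg p$ was checked separately.
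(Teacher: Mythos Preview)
Your proposal is correct and follows essentially the same route as the paper: the paper's own ``proof'' is merely the one-line remark that the theorem is a conclusion from Facts~\ref{N2:irr}--\ref{eq:5}, Corollary~\ref{skr6} and Theorem~\ref{skr}, and you have spelled out precisely how those results combine to give both the upper bound of $15$ classes and, via the $\es^+$ characterisation and the Appendix tables, pairwise non-equivalence and irreducibility. Your two-half decomposition, the $3+4+4+3+1$ tally, and the use of the nine-column validity tables for distinguishing the representatives are exactly what the paper has in mind.
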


Two following theorems gather all relations between elements of the~set $\mathcal{N}/_{\equiv}$. We present them in the form of implications of n-formulae for chosen representatives of equivalence classes, as this is more readable. By stating that an implication $N_k p \rightarrow N_m p$ is valid we mean that the reverse implication is not valid. 

\begin{thm}\label{LAeven}
Following implications of even n-formulae are valid:
\begin{enumerate}
\item\label{LAe:1a} $\Asim  \neg p \rightarrow p$
\item\label{LAe:2a} $p \rightarrow \neg \neg p$
\item\label{LAe:3a} $\neg \neg p \rightarrow \neg \neg \Asim  \Asim  p$
\item\label{LAe:4a} $\neg \neg \Asim  \Asim  p \rightarrow \neg \Asim  p$
\item\label{LAe:5a} $\neg \Asim  p \rightarrow \Asim  \Asim  \neg \neg p$
\item\label{LAe:1b} $\Asim  \neg p \rightarrow \neg \Asim  \Asim  \neg p$
\item\label{LAe:2b} $\neg \Asim  \Asim  \neg p \rightarrow \neg \neg p$
\item\label{LAe:2c} $p\rightarrow \Asim  \Asim  p$
\item\label{LAe:3c} $\Asim  \Asim  p \rightarrow \neg \neg \Asim  \Asim  p$
\end{enumerate}
\end{thm}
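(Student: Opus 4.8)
The plan is to treat the nine items in two passes. First I establish each implication $N_k p\rightarrow N_m p$; then, for each of them, I exhibit a single r-model from $\es$ --- occasionally by reading a formula off at one of its imaginary nodes, i.e.\ in a pseudosubmodel --- which refutes the converse $N_m p\rightarrow N_k p$, so that the implication is strict, as the convention preceding the theorem requires. Every validity argument is an instance of the routine template of the paper: assume $\kor\not\Vdash N_k p\rightarrow N_m p$ for some r-model, fix $u\geq\kor$ with $u\Vdash N_k p\oraz u\not\Vdash N_m p$, and derive a contradiction from Fact~\ref{f:kor} and Proposition~\ref{forcing}.

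Five of the validities cost nothing new. Items~\ref{LAe:1a} and~\ref{LAe:1b} are the instances $(k,m)=(0,0)$ and $k=0,\ N_{2m}p=\neg\Asim\Asim\neg p$ of Proposition~\ref{lem:MIN}.\ref{sn:even}; item~\ref{LAe:5a} is the instance $N_{2k}p=\neg\Asim p$ of Proposition~\ref{lem:MAX}.\ref{LA:even}; item~\ref{LAe:2c} is the intuitionistic validity $p\rightarrow\Asim\Asim p$, which survives in $\ICL$ because $\Asim$ keeps its standard Kripke reading. Items~\ref{LAe:2a} and~\ref{LAe:3c} are the two instances we need of the schema $A\rightarrow\neg\neg A$, and this schema is $\ICL$-valid for arbitrary $A$: above the root $\neg\neg A$ is forced by the last point of Fact~\ref{f:kor}, and $\kor\Vdash\neg\neg A\wtw\kor\Vdash A$ by points~\ref{kor:f}--\ref{kor:r} of Fact~\ref{f:kor}, so $A\rightarrow\neg\neg A$ is forced at every world.

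The remaining validities, items~\ref{LAe:3a}, \ref{LAe:4a} and~\ref{LAe:2b}, I would prove by running the template explicitly; in each the consequent begins with a $\perp$-negation, so point~\ref{pneg:r} of Fact~\ref{f:kor} forces the witnessing world $u$ to be the root at once. For~\ref{LAe:4a}: $u=\kor$ gives $\kor\Vdash\Asim p$, while stripping the two $\neg$'s off the antecedent via points~\ref{kor:f}--\ref{kor:r} of Fact~\ref{f:kor} gives $\kor\Vdash\Asim\Asim p$, hence $\kor\not\Vdash\Asim p$, a contradiction. For~\ref{LAe:2b}: $u=\kor$ gives $\kor\Vdash\neg p$, so $\kor\not\Vdash p$, while $\kor\not\Vdash\Asim\Asim\neg p$ supplies some $v\geq\kor$ with $v\Vdash\Asim\neg p$, hence $v\not\Vdash\neg p$, and point~\ref{pneg:r} of Fact~\ref{f:kor} forces $v=\kor$ and $\kor\Vdash p$, a contradiction. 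For~\ref{LAe:3a}: $u=\kor$ gives $\kor\not\Vdash\Asim\Asim p$, while $\kor\Vdash\neg\neg p$ gives $\kor\Vdash p$, whence $\kor\Vdash\Asim\Asim p$ by the already-established item~\ref{LAe:2c}, a contradiction.

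For the converses I would record one refuting r-model per item; listed in the order \ref{LAe:1a}, \ref{LAe:2a}, \ref{LAe:3a}, \ref{LAe:4a}, \ref{LAe:5a}, \ref{LAe:1b}, \ref{LAe:2b}, \ref{LAe:2c}, \ref{LAe:3c} these are $\mjj$, $\mzz$, $\mzj$, $\vau$, $\mzz$, $\mzz$, $\mjj$, $\mzj$, $\mzz$ (equivalently, all of this is extractable from the model tables in the Appendix). The mechanism is uniform: at an imaginary node every $\neg$-formula is forced whereas an $\Asim$-formula may still fail there, and correspondingly $\neg\neg B$ agrees with $B$ at the root but is forced everywhere above it. Thus the converse of~\ref{LAe:2a} is the countermodel $\mzz$ already noted before Section~5; the converses of~\ref{LAe:1b} and~\ref{LAe:3c} fail at the imaginary node of $\mzz$, where the $\neg\neg$-headed antecedent is vacuously forced but the $\Asim$-headed consequent is not; the converses of~\ref{LAe:3a}, \ref{LAe:5a} and~\ref{LAe:2c} fail already at the \emph{root} of $\mzj$, $\mzz$, $\mzj$, because there the single $\Asim$-formula occurring in the antecedent is forced nowhere (so the antecedent holds) while the consequent refutes $p$, directly or through a $\neg$; the converse of~\ref{LAe:4a} fails at the root of $\vau$, which refutes $\Asim p$ and also $\Asim\Asim p$, the latter because its imaginary $\circ$-node forces $\Asim p$; and the converses of~\ref{LAe:1a} and~\ref{LAe:2b} fail at the root of $\mjj$, which forces $p$ (hence $\neg\neg p$) but refutes $\Asim\neg p$, so $\Asim\neg p$ is forced nowhere in $\mjj$ and its root therefore forces $\Asim\Asim\neg p$, i.e.\ refutes $\neg\Asim\Asim\neg p$. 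The one genuine difficulty I foresee is bookkeeping discipline: each delicate step above turns on the two observations just isolated --- $\neg\neg B$ equals $B$ at the root and holds above it, and $\Asim\neg A$ is refuted at every imaginary node --- and a single misapplication of either would corrupt an entry of the model tables and so break a strictness claim.
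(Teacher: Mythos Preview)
Your argument is correct, and it is more informative than the paper's own proof. The paper disposes of Theorem~\ref{LAeven} (and Theorem~\ref{LAodd}) in one line: it simply refers the reader to the Appendix tables and observes that $N_k p\preceq N_m p$ with strict inequality amounts to $\es^{+}(N_k p)\varsubsetneq\es^{+}(N_m p)$. You instead derive five of the nine validities from Propositions~\ref{lem:MIN}, \ref{lem:MAX}, the intuitionistic $p\rightarrow\Asim\Asim p$, and the $\ICL$-schema $A\rightarrow\neg\neg A$, and you run the explicit countermodel template for the remaining three; for strictness you name a concrete model from $\es$ in each case. This buys genuine understanding of \emph{why} each implication holds or fails, at the cost of more text; the paper's approach buys brevity and uniformity, since once the tables are in place the theorem is a lookup. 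Your parenthetical remark that everything is extractable from the Appendix tables is exactly the paper's proof.

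Two cosmetic points. First, in your treatment of the converses of items~\ref{LAe:1b} and~\ref{LAe:3c} you describe both antecedents as ``$\neg\neg$-headed''; for item~\ref{LAe:1b} the antecedent $\neg\Asim\Asim\neg p$ is only $\neg$-headed. This does not damage the argument, since a single leading $\neg$ already forces the formula at every imaginary node (Fact~\ref{f:kor}), but the phrasing should be adjusted. Second, the explanatory clause for the converses of items~\ref{LAe:3a}, \ref{LAe:5a}, \ref{LAe:2c} (``the single $\Asim$-formula \ldots\ the consequent refutes $p$, directly or through a $\neg$'') is compressed to the point of being hard to parse; the computations it summarises are right, but a reader would benefit from one extra sentence per case.
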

\begin{thm}\label{LAodd}
Following implications of odd n-fomulae are valid:
\begin{enumerate}
\item\label{LAo:1a} $\Asim  \neg \neg p \rightarrow \Asim  p$
\item\label{LAo:2a} $ \Asim  p \rightarrow \neg \neg \Asim  p$
\item\label{LAo:3a} $\neg \neg \Asim  p \rightarrow \neg \Asim  \Asim  p$
\item\label{LAo:4a} $\neg \Asim  \Asim  p \rightarrow \neg p$
\item\label{LAo:5a} $\neg p \rightarrow \Asim  \Asim  \neg p$
\item\label{LAo:1b} $\Asim  \neg \neg p \rightarrow \neg \Asim  \Asim  \neg \neg p$
\item\label{LAo:2b} $\neg \Asim  \Asim  \neg \neg p \rightarrow \neg \neg \Asim  p$
\end{enumerate}
\end{thm}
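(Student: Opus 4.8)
The plan is to dispatch the seven forward implications individually and, for each, record a height-$\le 2$ countermodel showing the converse is not valid; everything stays inside the bounded-countermodel method of Sections~5--6, and in fact almost all of it reduces to results already in hand. Recall the systematic option: by the criterion $N_k p \preceq N_m p \iff \es^{+}(N_k p) \subseteq \es^{+}(N_m p)$, each claim amounts to a strict inclusion of $\es^{+}$-sets, readable off the Appendix tables. Below I spell out the structural shortcuts, which I find more transparent than the tables.

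\emph{Forward implications.} Since $\Asim\neg\neg p = \Asim\neg N_1 p$ with $N_1 p := \neg p$ an odd n-formula, Proposition~\ref{lem:MIN}.\ref{sn:odd} gives $\Asim\neg\neg p \rightarrow N_{2m+1} p$ for every odd n-formula on the right; instantiating the consequent by $\Asim p$ and by $\neg\Asim\Asim\neg\neg p$ yields \ref{LAo:1a} and \ref{LAo:1b}. Dually $\neg p = N_1 p$ is odd, so Proposition~\ref{lem:MAX}.\ref{LA:odd} gives $\neg p \rightarrow \Asim\Asim\neg p$, which is \ref{LAo:5a}. For the rest I would isolate two schemata valid in every r-model: $(\star)$ $A \rightarrow \neg\neg A$, and $(\dagger)$ $\Asim A \rightarrow \neg A$. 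For $(\star)$: if $u \Vdash A$ and $w \geq u$, then either $w > \kor$, or $w = \kor$, forcing $u = \kor$, whence $\kor \Vdash A$ gives $\kor \not\Vdash \neg A$ by point~\ref{kor:r} of Fact~\ref{f:kor}; so $u \Vdash \neg\neg A$. For $(\dagger)$: $u \Vdash \Asim A$ already makes $A$ false on the whole cone above $u$, which is more than $\neg A$ demands. Now \ref{LAo:2a} is the instance $A := \Asim p$ of $(\star)$. The remaining three follow by combining a valid implication with the contravariance of $\neg$ in its argument, i.e.\ the fact (proved as in $(\star)$, using point~\ref{kor:f} of Fact~\ref{f:kor}) that $\models C \rightarrow B$ entails $\models \neg B \rightarrow \neg C$: \ref{LAo:3a} from the instance $\Asim\Asim p \rightarrow \neg\Asim p$ of $(\dagger)$; \ref{LAo:4a} from the intuitionistic validity $p \rightarrow \Asim\Asim p$; and \ref{LAo:2b} from point~\ref{LAe:5a} of Theorem~\ref{LAeven}, namely $\neg\Asim p \rightarrow \Asim\Asim\neg\neg p$. (Item \ref{LAo:2b} also has a direct one-step proof: refuting $\neg\neg\Asim p$ collapses the model to the root and forces $\kor \Vdash p$, while forcing $\neg\Asim\Asim\neg\neg p$ at the root requires some node carrying $\Asim\neg\neg p$, impossible since every imaginary world forces $\neg\neg p$ by Fact~\ref{f:kor}.)

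\emph{Converses.} By the stated convention each item also claims the reverse implication fails, and for that I would exhibit one model from $\es \cup \{i(\mz), i(\mj), i(\mz , \mj)\}$ satisfying the consequent while refuting the antecedent --- equivalently, the first place where the two $\es^{+}$-rows diverge. For instance $\mzz$ refutes $\Asim p \rightarrow \Asim\neg\neg p$ (its root forces $\Asim p$, no node carrying $p$, but the imaginary world forces $\neg\neg p$, so $\kor \not\Vdash \Asim\neg\neg p$), killing the converse of \ref{LAo:1a}; and $\mjj$ refutes $\neg\neg\Asim p \rightarrow \Asim p$ (the imaginary world forces $\neg\neg\Asim p$ by Fact~\ref{f:kor} but refutes $\Asim p$), killing the converse of \ref{LAo:2a}. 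The other five converses go the same way, none needing a model of height exceeding $2$.

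No step is deep; the single thing to be careful about --- and the main source of slips rather than of difficulty --- is the asymmetry between the root and the imaginary worlds: $\neg B$ can be refuted only at the root and is automatically true at every imaginary world, so after extracting the witness $u$ from $\kor \not\Vdash N_k p \rightarrow N_m p$ one must split uniformly on $u = \kor$ versus $u > \kor$ and apply the right clause of Fact~\ref{f:kor} (point~\ref{pneg:r} for the ``sent back to the root'' step) or of Proposition~\ref{forcing}. As this merely repeats the pattern of Propositions~\ref{eq:skr}--\ref{lem:MAX}, I would, like the authors, present only the shortcuts above (or the table comparison) and omit the routine bookkeeping.
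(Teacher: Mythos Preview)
Your argument is correct and noticeably more informative than the paper's. The paper proves Theorems~\ref{LAeven} and~\ref{LAodd} jointly by a single sentence: check that $\es^{+}(N_k p)\varsubsetneq\es^{+}(N_m p)$ using the Appendix tables. You mention this criterion too, but then replace the table lookup by structural shortcuts: Proposition~\ref{lem:MIN}.\ref{sn:odd} for \ref{LAo:1a} and \ref{LAo:1b}, Proposition~\ref{lem:MAX}.\ref{LA:odd} for \ref{LAo:5a}, the general schemata $A\to\neg\neg A$ and $\Asim A\to\neg A$ together with contravariance of $\neg$ for \ref{LAo:2a}--\ref{LAo:4a}, and Theorem~\ref{LAeven}.\ref{LAe:5a} plus contravariance for \ref{LAo:2b}. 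All of these are sound, and your verification of $(\star)$, $(\dagger)$ and $\neg$-contravariance in r-models is clean. The trade-off is the expected one: the paper's proof is uniform and mechanical but opaque; yours explains \emph{why} each implication holds and reuses earlier propositions, at the cost of a case split. For the converses you do exactly what the paper implicitly does (exhibit a separating model/pseudosubmodel from $\es$), so there is no real difference there.

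One small slip worth tidying: in your parenthetical direct argument for \ref{LAo:2b}, the clause ``refuting $\neg\neg\Asim p$ collapses the model to the root and forces $\kor\Vdash p$'' mis-attributes the collapse. Refuting $\neg\neg\Asim p$ at $u$ only gives $u=\kor$ and $\kor\Vdash\neg\Asim p$, hence \emph{some} world forces $p$; it is the other hypothesis---the existence of a node satisfying $\Asim\neg\neg p$---that forces the model to consist of the root alone (and $\kor\not\Vdash p$). The two pieces then contradict each other just as you intend, so the argument is fine once the attribution is swapped; and in any case your primary proof of \ref{LAo:2b} via contravariance from Theorem~\ref{LAeven}.\ref{LAe:5a} is already correct and self-contained.
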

\begin{proof}
As a proof of both Theorem~\ref{LAeven} and Theorem~\ref{LAodd} we refer the Reader to validity tables in the Appendix. Checking the validity of a formula $N_{k} p \rightarrow N_{m} p$ (and stating that the reverse implication is not valid) is reduced to checking if $\es^{+}(N_{k} p) \varsubsetneq \es^{+}(N_{m} p)$.
\end{proof}
Relations between classes of equivalent formulae are described by Lindenbaum Algebra. In the case of negational monadic fragment of $\ICL$ we cannot create such a structure. So we present a poset $(\mathcal{N}^\ast \cup \{0, \perp, 1\}, \preceq)$, where $\mathcal{N}^\ast $ is the set of chosen representatives of equivalence classes of $\mathcal{N}/_\equiv$. 
$$
\xymatrix{
 & & & 1 & & \\
 & \Asim  \Asim  \neg \neg p \ar@{-}[urr] & & & & \Asim  \Asim \neg p \ar@{-}[ull] \\
 & \neg \Asim  p \ar@{-}[u] & & & & \neg p \ar@{-}[u] \\
 & \neg \neg \Asim  \Asim  p \ar@{-}[u] & & & & \neg \Asim  \Asim  p \ar@{-}[u]\\
\Asim  \Asim  p \ar@{-}[ur] & \neg \neg p \ar@{-}[u] & & & & \neg \neg \Asim  p \ar@{-}[u]\\
 & p \ar@{-}[ul] \ar@{-}[u] & \neg \Asim  \Asim  \neg p \ar@{-}[ul] & & \neg \Asim  \Asim  \neg \neg p \ar@{-}[ur] & \Asim  p \ar@{-}[u] \\
 & \Asim  \neg p \ar@{-}[u] \ar@{-}[ur] &  & \perp \ar@{-}[ul] \ar@{-}[ur] & & \Asim  \neg \neg p \ar@{-}[ul] \ar@{-}[u] \\
 & & & 0 \ar@{-}[ull] \ar@{-}[u] \ar@{-}[urr] & & \\
}
$$
The addition of constants enables to join the two posets of equivalence classes of even and odd n-formulae.

After looking into properties of negational formulae several questions arose. They consider mainly computational content of n-formulae. Further work would also include investigations of implicational fragment of $\ICL$.

\flushleft
{\em Institute of Mathematics\\ University of Silesia\\ Bankowa 14\\ 40-007 Katowice, Poland}
\par
{\it e-mail:} {\ttfamily aglenszczyk@us.edu.pl}
\newpage

\section*{Appendix}
We give complete validity tables for n-formulae $N_{k} p$ up to the length of $5$. 
\begin{center}
\medskip
\begin{tabular}{|c|c|c|c|c|c|c|c|c|c|}
\hline \rule[-2ex]{0pt}{5ex}
 & $\mz$ & $\mj$ & $\mzz$ & $\mzj$ & $\mjj$ & $\vau$ & $i(\mz)$ & $i(\mj)$ & $i(\mz, \mj)$\\ 
\hline\rule[-2ex]{0pt}{5ex}  
$p$ 
& - & + & - & - & + & - & - & + & +
\\ 
\hline\rule[-2ex]{0pt}{5ex}  
$\Asim p$ 
& + & - & + & - & - & - & + & - & - 
\\ 
\hline\rule[-2ex]{0pt}{5ex}  
$\neg p$ 
&  + & -  & +  & + & - & + & + & + & + 
\\ 
\hline\rule[-2ex]{0pt}{5ex}  
$\Asim\Asim p$ 
& - & + & - & + & + & - & - & + & - 
\\ 
\hline\rule[-2ex]{0pt}{5ex}  
$\Asim\neg p$ 
& - & + & - & - & - & - & - & - & - 
\\ 
\hline\rule[-2ex]{0pt}{5ex}  
$\neg\Asim p$ 
& - & + & - & + & + & + & + & + & + 
\\ 
\hline\rule[-2ex]{0pt}{5ex}  
$\neg\neg p$ 
& - & + & - & - & + & - & + & + & + 
\\ 
\hline\rule[-2ex]{0pt}{5ex}  
$\Asim\Asim\Asim p$ 
& + & - & + & - & - & - & + & - & - 
\\ 
\hline\rule[-2ex]{0pt}{5ex}  
$\Asim\Asim\neg p$ 
& + & - & + & + & + & + & + & + & + 
\\ 
\hline\rule[-2ex]{0pt}{5ex}  
$\Asim\neg\Asim p$ 
& + & - & - & - & - & - & - & - & -
\\ 
\hline\rule[-2ex]{0pt}{5ex}  
$\Asim\neg\neg p$ 
& + & - & - & - & - & - & - & - & - 
\\ 
\hline\rule[-2ex]{0pt}{5ex}  
$\neg\Asim\Asim p$ 
& + & - & + & - & - & + & + & + & +
\\
\hline\rule[-2ex]{0pt}{5ex}  
$\neg\Asim\neg p$ 
& + & -  & + & + & + & + & + & + & +
\\ 
\hline\rule[-2ex]{0pt}{5ex}  
$\neg\neg\Asim p$ 
& + & - & + & - & - & - & + & + & +
\\ 
\hline\rule[-2ex]{0pt}{5ex}  
$\neg\neg\neg p$ 
& + & - & + & + & - & + & + & + & +
\\ 
\hline\rule[-2ex]{0pt}{5ex}  
$\Asim\Asim\Asim\Asim p$ 
& - & + & - & + & + & - & - & + & - 
\\ 
\hline\rule[-2ex]{0pt}{5ex}  
$\Asim\Asim\Asim\neg p$ 
& - & + & - & - & - & - & - & - & -
\\ 
\hline\rule[-2ex]{0pt}{5ex}  
$\Asim\Asim\neg\Asim p$ 
& - & + & + & + & + & + & + & + & +
\\ 
\hline\rule[-2ex]{0pt}{5ex}  
$\Asim\Asim\neg\neg p$ 
& - & + & + & + & + & + & + & + & +
\\
\hline 
\end{tabular}

\medskip

\begin{tabular}{|c|c|c|c|c|c|c|c|c|c|}
\hline\rule[-2ex]{0pt}{5ex} 
 & $\mz$ & $\mj$ & $\mzz$ & $\mzj$ & $\mjj$ & $\vau$ & $i(\mz)$ & $i(\mj)$ & $i(\mz, \mj)$\\  
\hline\rule[-2ex]{0pt}{5ex}  
$\Asim\neg\Asim\Asim p$ 
& - & + & - & - & - & - & - & - & -
\\
\hline\rule[-2ex]{0pt}{5ex}  
$\Asim\neg\Asim\neg p$ 
& - & + & - & - & - & - & - & - & -
\\ 
\hline\rule[-2ex]{0pt}{5ex}  
$\Asim\neg\neg\Asim p$ 
& - & + & - & - & - & - & - & - & -
\\ 
\hline\rule[-2ex]{0pt}{5ex}  
$\Asim\neg\neg\neg p$ 
& - & + & - & - & - & - & - & - & -
\\
\hline\rule[-2ex]{0pt}{5ex}  
$\neg\Asim\Asim\Asim p$ 
& - & + & - & + & + & + & + & + & +
\\ 
\hline\rule[-2ex]{0pt}{5ex}  
$\neg\Asim\Asim\neg p$ 
& - & + & - & - & - & - & + & + & +
\\ 
\hline\rule[-2ex]{0pt}{5ex}  
$\neg\Asim\neg\Asim p$ 
& - & + & + & + & + & + & + & + & +
\\ 
\hline\rule[-2ex]{0pt}{5ex}  
$\neg\Asim\neg\neg p$ 
& - & + & + & + & + & + & + & + & +
\\ 
\hline\rule[-2ex]{0pt}{5ex}  
$\neg\neg\Asim\Asim p$ 
& - & + & - & + & + & - & + & + & +
\\
\hline\rule[-2ex]{0pt}{5ex}  
$\neg\neg\Asim\neg p$ 
& - & + & - & - & - & - & + & + & +
\\ 
\hline\rule[-2ex]{0pt}{5ex}  
$\neg\neg\neg\Asim p$ 
& - & + & - & + & + & + & + & + & +
\\ 
\hline\rule[-2ex]{0pt}{5ex}  
$\neg\neg\neg\neg p$ 
& - & + & - & - & + & - & + & + & +
\\ 
\hline\rule[-2ex]{0pt}{5ex}  
$\Asim \Asim \Asim \Asim \Asim p$
& + & - & + & - & - & - & + & - & - 
\\ 
\hline\rule[-2ex]{0pt}{5ex} 
$\Asim \Asim \Asim \Asim \neg p$
& + & - & + & + & + & + & + & + & +
\\ 
\hline\rule[-2ex]{0pt}{5ex} 
$\Asim \Asim \Asim \neg \Asim p$
& + & - & - & - & - & - & - & - & -
\\ 
\hline\rule[-2ex]{0pt}{5ex}  
$\Asim \Asim \Asim \neg \neg p$
& + & - & - & - & - & - & - & - & -
\\ 
\hline\rule[-2ex]{0pt}{5ex}  
$\Asim \Asim \neg \Asim \Asim p$
& + & - & + & + & + & + & + & + & +
\\ 
\hline\rule[-2ex]{0pt}{5ex}  
$\Asim \Asim \neg \Asim \neg p$
& + & - & + & + & + & + & + & + & +
\\ 
\hline\rule[-2ex]{0pt}{5ex} 
$\Asim \Asim \neg \neg \Asim p$
& + & - & + & + & + & + & + & + & +
\\ 
\hline\rule[-2ex]{0pt}{5ex} 
$\Asim \Asim \neg \neg \neg p$
& + & - & + & + & + & + & + & + & +
\\ 
\hline\rule[-2ex]{0pt}{5ex} 
$\Asim \neg \Asim \Asim \Asim p$
& + & - & - & - & - & - & - & - & -
\\ 
\hline\rule[-2ex]{0pt}{5ex} 
$\Asim \neg \Asim \Asim \neg p$
& + & - & - & - & - & - & - & - & -
\\ 
\hline 
\end{tabular} 

\medskip

\begin{tabular}{|c|c|c|c|c|c|c|c|c|c|}
\hline\rule[-2ex]{0pt}{5ex} 
 & $\mz$ & $\mj$ & $\mzz$ & $\mzj$ & $\mjj$ & $\vau$ & $i(\mz)$ & $i(\mj)$ & $i(\mz, \mj)$\\  
\hline\rule[-2ex]{0pt}{5ex} 
$\Asim \neg \Asim \neg \Asim p$
& + & - & - & - & - & - & - & - & -
\\ 
\hline\rule[-2ex]{0pt}{5ex} 
$\Asim \neg \Asim \neg \neg p$
& + & - & - & - & - & - & - & - & -
\\ 
\hline\rule[-2ex]{0pt}{5ex} 
$\Asim \neg \neg \Asim \Asim p$
& + & - & - & - & - & - & - & - & -
\\ 
\hline\rule[-2ex]{0pt}{5ex} 
$\Asim \neg \neg \Asim \neg p$
& + & - & - & - & - & - & - & - & -
\\ 
\hline\rule[-2ex]{0pt}{5ex} 
$\Asim \neg \neg \neg \Asim p$
& + & - & - & - & - & - & - & - & -
\\ 
\hline\rule[-2ex]{0pt}{5ex}
$\Asim \neg \neg \neg \neg p$
& + & - & - & - & - & - & - & - & -
\\ 
\hline\rule[-2ex]{0pt}{5ex}  
$\neg \Asim \Asim \Asim \Asim p$
& + & - & + & - & - & + & + & + & +
\\
\hline\rule[-2ex]{0pt}{5ex} 
$\neg \Asim \Asim \Asim \neg p$
& + & -  & + & + & + & + & + & + & +
\\ 
\hline\rule[-2ex]{0pt}{5ex} 
$\neg \Asim \Asim \neg \Asim p$
& + & - & - & - & - & - & + & + & +
\\ 
\hline\rule[-2ex]{0pt}{5ex} 
$\neg \Asim \Asim \neg \neg p$
& + & - & - & - & - & - & + & + & +
\\ 
\hline\rule[-2ex]{0pt}{5ex} 
$\neg \Asim \neg \Asim \Asim p$
& + & -  & + & + & + & + & + & + & +
\\ 
\hline\rule[-2ex]{0pt}{5ex} 
$\neg \Asim \neg \Asim \neg p$
& + & -  & + & + & + & + & + & + & +
\\ 
\hline\rule[-2ex]{0pt}{5ex}  
$\neg \Asim \neg \neg \Asim p$
& + & -  & + & + & + & + & + & + & +
\\ 
\hline\rule[-2ex]{0pt}{5ex} 
$\neg \Asim \neg \neg \neg p$
& + & -  & + & + & + & + & + & + & +
\\ 
\hline\rule[-2ex]{0pt}{5ex}  
$\neg \neg \Asim \Asim \Asim p$
& + & - & + & - & - & - & + & + & +
\\ 
\hline\rule[-2ex]{0pt}{5ex}  
$\neg \neg \Asim \Asim \neg p$
& + & -  & + & + & + & + & + & + & +
\\ 
\hline\rule[-2ex]{0pt}{5ex}
$\neg \neg \Asim \neg \Asim p$
& + & - & - & - & - & - & + & + & +
\\ 
\hline\rule[-2ex]{0pt}{5ex}
$\neg \neg \Asim \neg \neg p$
& + & - & - & - & - & - & + & + & +
\\ 
\hline\rule[-2ex]{0pt}{5ex}
$\neg \neg \neg \Asim \Asim p$
& + & - & + & - & - & + & + & + & +
\\
\hline\rule[-2ex]{0pt}{5ex}  
$\neg \neg \neg \Asim \neg p$
& + & -  & + & + & + & + & + & + & +
\\ 
\hline\rule[-2ex]{0pt}{5ex}  
$\neg \neg \neg \neg \Asim p$
& + & - & + & - & - & - & + & + & +
\\ 
\hline\rule[-2ex]{0pt}{5ex}  
$\neg \neg \neg \neg \neg p$
& - & + & - & - & + & - & + & + & +
\\ 
\hline 
\end{tabular} 
\end{center}

\end{document}